\newif{\ifdraft}\drafttrue
\draftfalse

\ifdraft
\documentclass[a4paper,USenglish]{lipics}
\usepackage{showkeys}
\else
\documentclass[a4paper,USenglish]{lipics}
\fi 

\clubpenalty=10000
\widowpenalty = 10000

\usepackage[shortlabels]{enumitem}
\setlist[enumerate]{nosep}
\setlist[itemize]{nosep}
\setlist[description]{nosep}
\usepackage{mathtools}
\usepackage[utf8]{inputenc}
\usepackage{amssymb,amsmath,dsfont}
\usepackage{fancyhdr}
\usepackage{stmaryrd}
\usepackage{xspace}
\usepackage{tikz}
\usepackage{multicol}
 \hyphenation{amen-able} 
 \hyphenation{strong-ly}

 \usepackage{tabularx}  
 \usepackage{ragged2e}  
 
\usepackage{microtype}



\newtheorem{lemma*}{Lemma}
\newtheorem*{claim*}{Claim}
\theoremstyle{theorem}
\newtheorem{proposition}[theorem]{Proposition}

\theoremstyle{remark}
\newtheorem{question}[theorem]{Question}

\newcommand{\qmp}{quotient presentation\xspace}
\newcommand{\qmps}{quotient presentations\xspace}

\newcommand{\coords}[1]{\ensuremath{\mathrm{Coord}({#1})}} 
\newcommand{\coordsj}[2]{\ensuremath{\mathrm{Coord}_{{#2}}({#1})}} 
\newcommand{\pivot}[1]{\ensuremath{\pi_{{#1}}}} 



\usepackage{amsfonts}
\usepackage{amssymb}
\usepackage{amsmath}

\usepackage{tabularx,hhline,rotating,xspace}
\usepackage{tikz}
\usetikzlibrary{calc}


\usepackage{prettyref}

\bibliographystyle{abbrv}

\renewcommand{\setminus}{\mysetminus}

%
%


\makeindex


 \newcommand{\prref}\prettyref
\newrefformat{thm}{Theorem~\ref{#1}}
\newrefformat{lem}{Lem\-ma~\ref{#1}}
\newrefformat{def}{Definition~\ref{#1}}
\newrefformat{cor}{Corollary~\ref{#1}} 
\newrefformat{prop}{Proposition~\ref{#1}}
\newrefformat{sec}{Section~\ref{#1}}
\newrefformat{cap}{Chapter~\ref{#1}}
\newrefformat{bsp}{Example~\ref{#1}}
\newrefformat{rem}{Remark~\ref{#1}}
\newrefformat{fig}{Figure~\ref{#1}}
\newrefformat{eq}{(\ref{#1})}
\newrefformat{ex}{Example~\ref{#1}}
\newrefformat{tab}{Table~\ref{#1}}
\newrefformat{app}{Appendix~\ref{#1}}
\newrefformat{alg}{Algorithm~\ref{#1}}
\newrefformat{qu}{Question~\ref{#1}}

\newcommand{\mysetminusD}{\raisebox{.8pt}{\hbox{\tikz{\draw[line width=0.6pt,line cap=round] (3.5pt,0pt) -- (0,5.2pt);}}}}
\newcommand{\mysetminusT}{\mysetminusD}
\newcommand{\mysetminusS}{\raisebox{.5pt}{\hbox{\tikz{\draw[line width=0.45pt,line cap=round] (2.2pt,0) -- (0,3.8pt);}}}}
\newcommand{\mysetminusSS}{\raisebox{.35pt}{\hbox{\tikz{\draw[line width=0.4pt,line cap=round] (1.5pt,0) -- (0,2.8pt);}}}}

\newcommand{\mysetminus}{\mathbin{\mathchoice{\mysetminusD}{\mysetminusT}{\mysetminusS}{\mysetminusSS}}}


\newcommand{\ExtGCD}{\ensuremath{\textsc{ExtGCD}}\xspace}

\newcommand{\set}[2]{\left\{#1\; \middle|\; #2\right\}}

\newcommand{\oneset}[1]{\left\{\mathinner{#1}\right\}}


\newcommand{\abs}[1]{\left|\mathinner{#1}\right|}

\newcommand{\floor}[1]{\left\lfloor\mathinner{#1} \right\rfloor}

\newcommand{\gen}[1]{\left< \mathinner{#1} \right>}
\newcommand{\genr}[2]{\left< \, \mathinner{#1}\; \middle|\;\mathinner{#2} \, \right>}


\newcommand{\N}{\ensuremath{\mathbb{N}}}
\newcommand{\Z}{\ensuremath{\mathbb{Z}}}

  
 \newcommand{\NP}{\ensuremath{\mathsf{NP}}\xspace} %
 \renewcommand{\L}{\ensuremath{\mathsf{LOGSPACE}}\xspace} %
 \newcommand{\TC}{\ensuremath{\mathsf{TC}^0}\xspace}
 \newcommand{\Tc}[1]{\ensuremath{\mathsf{TC}^{#1}}\xspace}
 \newcommand{\Ac}[1]{\ensuremath{\mathsf{AC}^{#1}}\xspace}

 \newcommand{\NC}{\ensuremath{\mathsf{NC}}\xspace}
 
 \renewcommand{\P}{\ensuremath{\mathsf{P}}\xspace}





\renewcommand{\phi}{\varphi}

\newcommand{\Sig}{\Sigma}

\newcommand{\Del}{\Delta}


\newcommand{\Oh}{\mathcal{O}}



\newcommand{\cT}{\mathcal{T}}

\newcommand{\cP}{\mathcal{P}}
\newcommand{\cN}{\mathcal{N}}













\newcommand{\ssnq}{\subsetneqq}


\newcommand\Ncr{\ensuremath{\mathcal{N}_{c,r}}\xspace}


\newcommand{\sse}{\subseteq}


\newcommand{\proc}[1]{\textsc{#1}}

\newcommand\ie{i.\,e., }
\newcommand\Wlog{W.\,l.\,o.\,g.\ }
\newcommand\wwlog{w.\,l.\,o.\,g.\ }
\newcommand\eg{e.\,g.\xspace}

%
%
%


\title{\TC circuits for algorithmic problems in nilpotent groups
}

\author[1]{Alexei Myasnikov}
\author[2]{Armin Wei\ss}
\affil[1]{Stevens Institute of Technology, Hoboken, NJ, USA}
\affil[2]{Universit\"at Stuttgart, Germany
}
\authorrunning{A. Myasnikov, A. Wei\ss} 

\Copyright{Alexei Myasnikov, Armin Wei\ss}

\keywords{nilpotent groups, \TC, abelian groups, word problem, conjugacy problem, subgroup membership problem, greatest common divisors}

\begin{document}
\maketitle

\vspace{-6mm}
\begin{abstract}
	Recently, Macdonald et.\,al.\ showed that many algorithmic problems for finitely generated nilpotent groups including computation of normal forms, the subgroup membership problem, the conjugacy problem, and computation of subgroup presentations can be done in \L. Here we follow their approach and show that all these problems are complete for the uniform circuit class \TC~-- uniformly for all $r$-generated nilpotent groups of class at most $c$ for fixed $r$ and $c$.
	
	In order to solve these problems in \TC, we show that the unary version of the extended gcd problem (compute greatest common divisors and express them as linear combinations) is in \TC.
	
	Moreover, if we allow a certain binary representation of the inputs, then the word problem and computation of normal forms is still in uniform \TC, while all the other problems we examine are shown to be \TC-Turing reducible to the binary extended gcd problem.
	
	\vspace{-1mm}
\end{abstract}

\vspace{-1mm}
\tableofcontents
\renewcommand\rightmark{\TC circuits for algorithmic problems in nilpotent groups}
\renewcommand\leftmark{A. Myasnikov, A. Wei\ss}
\section{Introduction}

The word problem (given a word over the generators, does it represent the identity?) is one of the fundamental algorithmic problems in group theory introduced by Dehn in 1911 \cite{Dehn11}. While for general finitely presented groups all these problems are undecidable  \cite{nov55, boone59}, for many particular classes of groups decidability results have been established~-- not just for the word problem but also for a wide range of other problems. Finitely generated nilpotent groups are a class where many algorithmic problems are (efficiently) decidable (with some exceptions like the problem of solving equations~-- see \eg\ \cite{GarretaMO16}).

In 1958, Mal'cev~\cite{Malcev} established decidability of the word and subgroup membership problem by investigating finite approximations of nilpotent groups. In 1965, Blackburn~\cite{Blackburn} showed decidability of the conjugacy problem. However, these methods did not allow any efficient (\eg\ polynomial time) algorithms.
Nevertheless, in 1966 Mostowski provided ``practical'' algorithms for the word problem and several other problems \cite{Mostowski}.
In terms of complexity, a major step was the result by Lipton and Zalcstein \cite{Lipton_Zalc} that the word problem of linear groups is in \L. Together with the fact that finitely generated nilpotent groups are linear (see \eg\ \cite{Hal69,Kargapolov-Merzlyakov}) this gives a \L solution to the word problem of nilpotent groups, which was later improved to uniform \TC by Robinson \cite{Robinson93phd}.

A typical algorithmic approach to nilpotent groups is using so-called Mal'cev (or Hall--Mal'cev) bases (see \eg\ \cite{Hal69,Kargapolov-Merzlyakov}), which allow to carry out group operations by evaluating polynomials (see Lemma~\ref{lem:malcev}). This approach was systematically used in~\cite{KRRRCh} and \cite{Mostowski} or~-- in the more general setting of polycyclic presentations~-- in~\cite{Sim94} for solving (among others) the subgroup membership and conjugacy problem of polycyclic groups.
Recently in \cite{MNU2,MyasnikovNU16} polynomial time bounds for the equalizer and subgroup membership problems in nilpotent groups have been given. 
Finally, in \cite{MacDonaldMNV15} the following problems were shown to be in \L using the Mal'cev basis approach.
Here, $\Ncr$ denotes the class of nilpotent groups of nilpotency class at most $c$ generated by at most $r$ elements.
\begin{itemize}
	\item \label{Prob:WP} The \emph{word problem}: given $G \in \Ncr$ and $g\in G$, is $g=1$ in $G$?
	\item \label{Prob:NF} Given $G \in \Ncr$ and $g\in G$, compute the (Mal'cev) normal form of $g$.
	\item \label{Prob:MP} The \emph{subgroup membership problem}: Given $G \in \Ncr$ and $g,h_{1},\ldots,h_{n}\in G$, decide whether
	$g\in \langle h_{1},\ldots,h_{n}\rangle$ and, if so, express $g$ as a word over the subgroup generators $h_{1},\ldots,h_{n}$  (in \cite{MacDonaldMNV15} only the decision version was shown to be in \L~-- for expressing $g$ as a word over the original subgroup generators a polynomial time bound was given).
	\item \label{Prob:KP} 
	Given $G,H \in \Ncr$ and $K=\langle g_{1},\ldots,g_{n}\rangle \leq G$, together with a homomorphism $\phi:K\rightarrow H$ specified 
	by $\phi(g_{i})=h_{i}$, and some $h\in \mathrm{Im}(\phi)$, compute a generating set for $\ker(\phi)$ and find $g\in G$ such that $\phi(g)=h$.
	\item \label{Prob:P} Given $G \in \Ncr$ and $K=\langle g_{1},\ldots,g_{n}\rangle \leq G$, compute a presentation for $K$.
	\item \label{Prob:C} Given $G \in \Ncr$ and $g\in G$, compute a generating set for the centralizer of $g$.
	\item \label{Prob:CP} The \emph{conjugacy problem}: Given $G \in \Ncr$ and $g,h\in G$, decide whether or not there exists $u\in G$ such that $u^{-1}gu=h$ and, if so, find such an element $u$.
\end{itemize}
These problems are not only of interest in themselves, but also might serve as building blocks for solving the same problems in polycyclic groups~-- which are of particular interest because of their possible application in non-commutative cryptography \cite{EickK04}.
In this work we follow \cite{MacDonaldMNV15} and extend these results in several ways:
\begin{itemize}
	\item We give a complexity bound of uniform \TC for all the above problems.
	\item In order to derive this bound, we show that the extended gcd problem (given $a_1, \dots, a_n \in \Z$, compute $x_1, \dots, x_n \in \Z$ with $\gcd(a_1, \dots, a_n) = \sum_i a_ix_i$) with input and output in unary is in uniform \TC.
	\item Our description of circuits is for the uniform setting where $G \in \Ncr$ is part of the input (in \cite{MacDonaldMNV15} the uniform setting is also considered; however, only in some short remarks).
	\item  Since nilpotent groups have polynomial growth, it is natural to allow compressed inputs: we give a uniform \TC solution for the word problem allowing words with binary exponents as input~-- this contrasts with the situation with straight-line programs (\ie context-free grammars which produces precisely one word~-- another method of exponential compression) as input: then the word problem is hard for $\mathsf{C}_=\mathsf{L}$ \cite{KoenigL15}. Thus, the difficulty of the word problem with straight-line programs is not due to their compression but rather due to the difficulty of evaluating a straight-line program.
	\item We show that the other of the above problems are uniform-\TC-Turing-reducible to the (binary) extended gcd problem  
	 when the inputs (both the ambient group and the subgroup etc.) are given as words with binary exponents.
	\item We show how to solve the \emph{power problem} in nilpotent groups. This allows us to apply a result from \cite{MiasnikovVW17} in order to show that iterated wreath products of nilpotent groups have conjugacy problem in uniform \TC.
\end{itemize}
Thus, in the unary case we settle the complexity of the above problems completely. Moreover, it also seems rather difficult to solve the subgroup membership problem without computing gcds~-- in this case our results on binary inputs would be also optimal. Altogether, our results mean that many algorithmic problems are  no more complicated in nilpotent groups than in abelian groups.
Notice that while in \cite{MacDonaldMNV15} explicit length bounds on the outputs for all these problems are proven, we obtain polynomial length bounds simply by the fact that everything can be computed in uniform \TC (for which in the following we only write \TC).

Throughout the paper we follow the outline of \cite{MacDonaldMNV15}.
For a concise presentation, we copy many definitions from \cite{MacDonaldMNV15}.
Most of our theorems involve two statements: one for unary encoded inputs and one for binary encoded inputs. In order to have a concise presentation, we always put them in one result. We only consider finitely generated nilpotent groups without mentioning that further.

\vspace{-2mm}
\subparagraph*{Outline.}

We start with basic definitions on complexity as well as on nilpotent groups. In \prettyref{sec:subgroup} we describe how subgroups of nilpotent groups can be represented and develop a ``nice'' presentation for all groups in \Ncr. \prettyref{sec:WP} deals with the word problem and computation of normal forms. After that we solve the unary extended gcd problem in \TC and introduce the so-called matrix reduction in order to solve the subgroup membership problem. In \prettyref{sec:more} we present our result for the remaining of the above problems, in \prettyref{sec:Uniform2} we explain how to compute ``nice'' presentations, and in \prettyref{sec:pp} we apply the results of \cite{MiasnikovVW17} in order to show that the conjugacy problem of iterated wreath products of nilpotent groups is in \TC.
Finally, we conclude with some open questions.

\section{Preliminaries}
\subsection{Complexity}\label{sec:complexity}

For a finite \emph{alphabet} $\Sig$, the set of \emph{words} over $\Sig$ is denoted by $\Sig^*$.
Computation or decision problems are given by functions $f:\Del^* \to \Sig^*$ for some finite alphabets $\Del $ and $\Sig$. A decision problem ($=$ formal language) $L$ is identified with its characteristic function $\chi_L: \Del^* \to \oneset{0,1}$ with $\chi_L(x)=1$ if, and only if, $x \in L$. (In particular, the word and conjugacy problems can be seen as functions $\Sig^* \to \oneset{0,1}$.) We use circuit complexity as described in \cite{Vollmer99}.

\vspace{-3mm}\subparagraph*{Circuit Classes.}
The class \Tc{0} is defined as the class of functions computed by families of circuits of constant depth and polynomial size with unbounded fan-in Boolean gates (and, or, not) and majority gates. A majority gate (denoted by $\mathrm{Maj}$) returns $1$ if the number of $1$s in its input is greater or equal to the number of $0$s. In the following we always assume that the alphabets $\Del$ and $\Sig$ are encoded over the binary alphabet $\oneset{0,1}$ such that each letter uses the same number of bits. 
We say a function $f$ is \emph{\TC-computable} if $f \in \TC$.

In the following, we only consider $\mathsf{Dlogtime}$-uniform circuit families and we simply write $\TC$ as shorthand for $\mathsf{Dlogtime}$-uniform $\TC$. $\mathsf{Dlogtime}$-uniform means that there is a deterministic Turing machine which decides in time $\Oh(\log n)$ on input of two gate numbers (given in binary) and the string $1^n$ whether there is a wire between the two gates in the $n$-input circuit and also computes of which type some gates is. Note that the binary encoding of the gate numbers requires only $\Oh(\log n)$ bits~-- thus, the Turing machine is allowed to use time linear in the length of the encodings of the gates.
For more details on these definitions we refer to \cite{Vollmer99}.

We have the following inclusions (note that even $\TC \sse \P$ is not known to be strict):
\begin{align*}
\Ac0 \ssnq \TC \sse \L \sse \P.
\end{align*}

\vspace{-3mm}\subparagraph*{Reductions.} 
A function $f$ is \emph{\TC-Turing-reducible} to a function $g$ if there is a $\mathsf{Dlogtime}$-uniform family of \TC circuits computing $f$ which, in addition to the Boolean and majority gates, also may use oracle gates for $g$ (\ie gates which on input $x$ output $g(x)$). This is expressed by $f \in \TC(g)$. 
Note that if $f_1, \dots, f_k$ are in \TC, then $\TC(f_1, \dots, f_k) = \TC$. 

In particular, if $f$ and $g$ are $\TC$-computable functions, then also the composition $g \circ f $ is \TC-computable. 
We will extensively make use of this observation~-- which will also guarantee the polynomial size bound on the outputs of our circuits without additional calculations.

We will also use another fact frequently without giving further reference: on input of two alphabets $\Sig$ and $\Del$ (coded over the binary alphabet), a list of pairs $(a,v_a)$ with $a \in \Sig$ and $v_a \in \Del^*$ such that each $a \in \Sig$ occurs in precisely one pair, and a word $w \in \Sig^*$, the image $\phi(w)$ under the homomorphism $\phi$ defined by $\phi(a) = v_a$ can be computed in \TC \cite{LangeM98}.

\vspace{-3mm}\subparagraph*{Encoding numbers: unary vs.\ binary.}
There are essentially two ways of representing integer numbers: the usual way as a binary number where a string $a_0 \cdots a_n$ with $a_i \in \oneset{0,1}$ represents $\sum a_i 2^{n - i}$, and as a unary number where $k \in \N$ is represented by $1^k = \smash{\underbrace{11\cdots 1}_{k}}$ (respectively by $0^{n-k} 1^k$ if $n$ is the number of input bits).

We will state most results in this paper with both representations. The unary representation corresponds to group elements given as words over the generators, whereas the binary encoding will be used if inputs are given in a compressed form.

\begin{example}\label{ex:countTC} 
	The following problem $\mathrm{Count}$ is in \TC: given a bit-string $u$ of length $n$ and a number $j<n$ (we assume that it is given in unary as $0^{n-j}1^j$), decide whether the number of ones $\abs{u}_1$ in $u$ is exactly $j$.
	We have $\abs{u}_1 \geq j$ if, and only if, $\abs{u0^{j}1^{n-j}}_1 \geq n$. Thus,	
	\[\mathrm{Count}(u,j) = \mathrm{Maj}(u0^{j}1^{n-j})  \land  \left( \lnot \mathrm{Maj}(u0^{j}1^{n-j})  \right).\]
	In particular, the word problem of $\Z$ when $1$ is encoded as $1$ and $-1$ as $0$, which is simply the question whether $\abs{u}_1= n/2$ and $n$ even, is in \TC. 
\end{example}

\vspace{-3mm}\subparagraph*{Arithmetic in \TC.}
\proc{Iterated Addition} (resp.\ \proc{Iterated Multiplication}) are the following computation problems: On input of $n$ binary integers $a_1,\dots, a_n$ each having $n$ bits (\ie the input length is $N=n^2$), compute the binary representation of the sum $\sum_{i=1}^n a_i$ (resp.\ product $\prod_{i=1}^n a_i$).
For \proc{Integer Division} the input are two binary $n$-bit integers $a, b$; the binary representation of the integer $c=\floor{a/b}$ has to be computed.
The first statement of Theorem~\ref{thm:divisionTC} is a standard fact, see \cite{Vollmer99}; the other statements are due to Hesse, \cite{hesse01, HeAlBa02}.
\begin{theorem}[\cite{hesse01, HeAlBa02, Vollmer99}]\label{thm:divisionTC}\label{thm:additionTC}
	The problems \proc{Iterated Addition},
	\proc{Iterated Multiplication},
	\proc{Integer Division} are all in \TC no matter whether inputs are given in unary or binary.
\end{theorem}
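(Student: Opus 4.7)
The plan is to split into the unary and binary cases. For unary inputs, everything reduces essentially to the counting primitive of Example~\ref{ex:countTC}: writing each $a_i$ as $1^{a_i}$, the sum $\sum_i a_i$ is just the number of $1$s in the concatenation of the inputs, so its bits can be produced by \TC threshold circuits. Since conversion from unary to binary is itself a \TC counting task (determine each bit of $a_i$ by comparing $a_i$ to the appropriate threshold), \proc{Iterated Multiplication} and \proc{Integer Division} in unary reduce cleanly to their binary counterparts.

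For binary \proc{Iterated Addition} I would use a column-counting approach. For each column index $j$, let $c_j=\sum_i a_{i,j}$; each $c_j$ is an $\Oh(\log n)$-bit number obtainable by threshold gates, and the desired sum equals $\sum_j 2^j c_j$. Because each $c_j$ has only $\Oh(\log n)$ bits, these shifted summands have very limited mutual overlap, and a constant-depth carry-save (``$3$-to-$2$'') reduction followed by a standard fast adder collapses them into the final sum.

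The main obstacle is \proc{Iterated Multiplication}, which is the content of Hesse's theorem. My plan is Chinese remaindering: fix $\Oh(n^2)$ primes $p$ of bit-length $\Oh(\log n)$ whose product dominates any possible output, and for each such $p$ compute the product of the inputs modulo $p$ as follows. Pick a generator of $(\Z/p\Z)^\ast$, tabulate its discrete logarithm (the table is small enough to be hard-wired into the uniform circuit family), replace each $a_i \bmod p$ by its exponent in $\Z/(p-1)\Z$, perform iterated addition of these exponents, and exponentiate back via the same table. CRT reconstruction from the collected residues is again an iterated addition after table-lookup of the CRT coefficients. \proc{Integer Division} is then obtained by unrolling $\Oh(\log n)$ Newton iterations for $1/b$ into a constant-depth circuit whose only nontrivial subroutines are iterated multiplications of polynomially many polynomially-long numbers. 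The technical heart, shared by all three parts, is verifying that each of these reductions stays genuinely within constant depth and that the auxiliary tables required by the CRT step admit a $\mathsf{Dlogtime}$-uniform description; this is precisely the delicate content of Hesse's analysis and is what makes the result nontrivial.
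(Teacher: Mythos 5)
The paper does not prove this theorem at all: it is imported verbatim from the literature (the sentence preceding it attributes \proc{Iterated Addition} to \cite{Vollmer99} and the other two results to Hesse \cite{hesse01,HeAlBa02}), and the only argument the paper supplies is the easy remark that \emph{unary} division can be done by testing all candidate quotients $c\le a$. Your proposal is therefore not comparable to a proof in the paper; it is a reconstruction of the standard proofs from the cited references, and as an outline it follows exactly the route those references take (column counting for addition, Chinese remaindering with discrete logarithms for iterated multiplication, Newton iteration reduced to iterated multiplication for division). Two caveats. First, your treatment of binary \proc{Iterated Addition} is slightly off: a carry-save $3$-to-$2$ tree on $n$ summands has depth $\Theta(\log n)$, not constant. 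The standard fix is to partition the column indices $j$ into $\Oh(\log n)$ residue classes modulo $\lceil\log(n+1)\rceil$ so that within each class the shifted counts $2^j c_j$ occupy disjoint bit ranges and their sum is obtained for free; this leaves only $\Oh(\log n)$ numbers to add, which is handled by an $\Ac{0}$ circuit. Second, and more importantly, the entire difficulty of the theorem for iterated multiplication and division is the $\mathsf{Dlogtime}$-uniformity of the discrete-logarithm and CRT tables: ``hard-wiring'' them gives only a non-uniform (or \P-uniform) family, since the uniformity machine would itself have to compute discrete logs in time $\Oh(\log n)$. You correctly identify this as the delicate content of Hesse's analysis, but you do not supply it, so what you have is an accurate roadmap to \cite{hesse01,HeAlBa02} rather than a self-contained proof --- which is an acceptable position here only because the paper itself treats the theorem as a black box.
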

Note that if the numbers $a$ and $b$ are encoded in unary (as strings $1^a$ and $1^b$), division can be seen to be in \TC very easily: just try for all $0\leq c \leq a$ whether $0 \leq a - bc < b$.

\vspace{-3mm}\subparagraph*{Representing groups for algorithmic problems.}
We consider finitely generated groups $G$ together with finite generating sets $A$. Group elements are represented as words over the generators and their inverses (\ie as elements of $(A \cup A^{-1})^*$). 
We make no distinction between words and the group elements they represent. Whenever it might be unclear whether we mean equality of words or of group elements, we write ``$g=h$ in $G$'' for equality in $G$.

Words over the generators $\pm1$ of $\Z$ correspond to unary representation of integers. As a generalization of binary encoded integers, we introduce the following notion: a \emph{word with binary exponents} is a sequence $w_1, \dots, w_n$ where the $w_i$ are from a fixed generating set of the group together with a sequence of exponents $x_1, \dots, x_n$ where the $x_i \in \Z$ are encoded in binary. The word with binary exponents represents the word (or group element) $w=w_1^{x_1} \cdots w_n^{x_n}$.
Note that in a fixed nilpotent group \emph{every} word of length $n$ can be rewritten as a word with binary exponents using $\Oh(\log n)$ bits (this fact is well-known and also a consequence of Theorem~\ref{thm:nilpotentExpWP} below); thus, words with binary exponents are a natural way of representing inputs for algorithmic problems in nilpotent groups.

\subsection{Nilpotent groups and Mal'cev coordinates}\label{Section:Nilpotent}\label{sec:nilpotent}

Let $G$ be a group. For $x,y\in G$ we write $x^y=y^{-1} x y$ ($x$ \emph{conjugated} by $y$) and $[x,y] = x^{-1} y^{-1} xy$ (\emph{commutator} of $x$ and $y$). 
For subgroups $H_1, H_2 \leq G$, we have $[H_1,H_2] = \gen{\set{[h_1,h_2]}{ h_1 \in H_1, h_2 \in H_2}}$.
A group $G$ is called \emph{nilpotent} if it has central series, i.e. 
\begin{equation}\label{Eqn:CyclicSeries}
G=G_{1}\geq G_{2}\geq \cdots \geq G_{c}\geq G_{c+1}=1
\end{equation}
such that $[G,G_{i}]\leq G_{i+1}$ for all $i=1,\ldots,c$.
If $G$ is finitely generated, so are the abelian quotients $G_i/G_{i+1}$, $1\le i\le c$. Let $a_{i1},\ldots,a_{im_i}$ be a basis of $G_i/G_{i+1}$, i.e. a generating set such that $G_i/G_{i+1}$ has a presentation $\genr{ a_{i1},\ldots,a_{im_i}} {\!a_{ij}^{e_{ij}},\:\![a_{ik},a_{i\ell}],\text{ for } j\in \mathcal T_i,\, k,\ell \in \{1,\ldots,m_{i}\}\!}$, where $\mathcal{T}_{i}\subseteq\{1,\ldots,m_{i}\}$ (here $\cT$ stands for torsion) and $e_{ij}\in\Z_{>0}$ (be aware that we explicitly allow $e_{ij}=1$, which is necessary for our definition of \qmps in Section~\ref{sec:subgroup}). Formally put $e_{ij}=\infty$ for $j\notin \mathcal T_i$. Note that
\[
A=(a_{11},a_{12},\ldots,a_{cm_c})
\]
is a so-called polycyclic generating sequence for $G$, and we call $A$ a {\em Mal'cev basis associated to the central series}~\eqref{Eqn:CyclicSeries}. Sometimes we use $A$ interchangeably also for the set $A=\oneset{a_{11},a_{12},\ldots,a_{cm_c}}$.

For convenience, we will also use a simplified notation, in which the generators $a_{ij}$ and exponents $e_{ij}$ are renumbered by replacing each subscript $ij$ with $j+\sum\limits_{\ell<j}m_\ell$, so the generating sequence $A$ can be written as $A=(a_1,\ldots, a_m)$. We allow the expression $ij$ to stand for  
$j+\sum\limits_{\ell<j}m_\ell$ in other notations as well.
We also denote
\[
\mathcal{T} = \{i \;|\; e_{i}<\infty\}.
\]
By the choice of  $\{a_{1},\ldots,a_{m}\}$, every element $g\in G$ may be written uniquely in 
the form
\[
g=a_{1}^{\alpha_{1}}\cdots a_{m}^{\alpha_{m}},
\]
where $\alpha_{i}\in \Z$ and $0\leq \alpha_{i}<e_{i}$ whenever $i\in \mathcal{T}$.
The $m$-tuple $(\alpha_{1},\ldots,\alpha_{m})$ is called the \emph{coordinate vector} or \emph{Mal'cev coordinates}
of $g$ and is denoted $\coords{g}$, and the expression $a_{1}^{\alpha_{1}}\cdots a_{m}^{\alpha_{m}}$ is called the \emph{(Mal'cev) normal form} of $g$. We also 
denote $\alpha_{i}=\coordsj{g}{i}$.

To a Mal'cev basis $A$ we associate a presentation of $G$ as follows. 
For each $1\le i\le m$, let $n_i$ be such that $a_i\in G_{n_i}\setminus G_{n_i+1}$. If $i\in \mathcal{T}$, then $a_{i}^{e_{i}}\in G_{n_i+1}$, hence a relation 
\begin{align}\label{stdpolycyclic1}
a_{i}^{e_{i}} = a_{\ell}^{\mu_{i\ell}}\cdots a_{m}^{\mu_{im}}
\end{align}
holds in $G$ for $\mu_{ij}\in\Z$ and $\ell>i$ such that $a_\ell,\ldots,a_m\in G_{n_i+1}$.  Let $1\leq i<j\leq m$.  Since the series~\eqref{Eqn:CyclicSeries} is central, 
relations of the form
\begin{align}\allowdisplaybreaks
a_{j}a_{i} & =   a_{i} a_{j}a_{\ell}^{\alpha_{ij\ell}}\cdots a_{m}^{\alpha_{ijm}} \label{stdpolycyclic2}\\
a_{j}^{-1}a_{i} & =  a_{i} a_{j}^{-1} a_{\ell}^{\beta_{ij\ell}}\cdots a_{m}^{\beta_{ijm}} \label{stdpolycyclic3}
\end{align}
hold in $G$ for $\alpha_{ijk},\beta_{ijk}\in\Z$ and $l>j$ such that $a_\ell,\ldots, a_m\in G_{n_j+1}$. 
Now, $G$ is the group with generators $\{a_{1},\ldots,a_{m}\}$ subject to the relation of the the form (\ref{stdpolycyclic1})--(\ref{stdpolycyclic3}). 

A presentation with relations of the form (\ref{stdpolycyclic1})--(\ref{stdpolycyclic3}) for all $i$ resp.\ $i$ and $j$ is called a \emph{nilpotent presentation}.
Indeed, any presentation of this form will define a nilpotent group.
It is called \emph{consistent} if the order of $a_{i}$ 
modulo $\gen{ a_{i+1},\ldots,a_{m}}$ is precisely $e_{i}$ for all $i$. While presentations of this 
form need not, in general, be consistent, those derived from a central series of a group $G$ as above are consistent. 

Given a consistent nilpotent presentation, there is an easy way to solve the word problem: simply apply the rules of the form (\ref{stdpolycyclic2}) and (\ref{stdpolycyclic3}) to move all occurrences of $a_1^{\pm 1}$ in the input word to the left, then apply the power relations (\ref{stdpolycyclic1}) to reduce their number modulo $e_1$; finally, continue with $a_2$ and so on.

\vspace{-3mm}\subparagraph*{Multiplication functions.}  An crucial feature of the coordinate vectors for nilpotent groups is that the coordinates of a product $(a_{1}^{\alpha_{1}}\cdots a_{m}^{\alpha_{m}})(a_{1}^{\beta_{1}}\cdots a_{m}^{\beta_{m}})$ may be computed as a ``nice''  function (polynomial if $\cT = \emptyset$) of the integers $\alpha_{1},\ldots,\alpha_{m},\beta_{1},\ldots,\beta_{m}$. 

\begin{lemma}[\cite{Hal69, Kargapolov-Merzlyakov}]\label{lem:malcev}\label{lem:ab_operation}
	Let $G$ be a nilpotent group with Mal'cev basis $a_{1},\ldots,a_{m}$ and $\cT = \emptyset$. 
	There exist $p_{1},\ldots,p_{m}\in \Z[x_1, \dots, x_m,y_1,\dots,y_m]$ and 
	$q_{1},\ldots,q_{m}\in\Z[x_1, \dots, x_m,z]$ such that for $g,h\in G$ with $\coords{g}=(\gamma_{1},\ldots,\gamma_{m})$ and $\coords{h}=(\delta_{1},\ldots,\delta_{m})$ and  $l\in\Z$ we have
	\begin{enumerate}
		\item $\coordsj{gh}{i} = p_{i}(\gamma_{1},\ldots,\gamma_{m},\delta_{1},\ldots,\delta_{m})$,
		\item $\coordsj{g^{l}}{i} = q_{i}(\gamma_{1},\ldots,\gamma_{m}, l)$,
		\item $\coordsj{gh}{1}=\gamma_{1}+\delta_{1}$ and $\coordsj{g^{l}}{1}=l \gamma_{1}$.
	\end{enumerate}
\end{lemma}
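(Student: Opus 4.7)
The plan is to prove this classical lemma by induction on the nilpotency class $c$, using P. Hall's collection process. The base case $c=1$ is trivial: $G$ is torsion-free abelian, so $p_i(x,y) = x_i + y_i$ and $q_i(x,z) = z x_i$ work and all three claims follow at once.

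For the inductive step, let $Z = G_c$, which is central and, since $\cT = \es$, finitely generated torsion-free abelian. Reorder the Mal'cev basis so that $a_{k+1},\ldots,a_m$ form a basis of $Z$; the quotient $\bar G = G/Z$ is nilpotent of class $c-1$ with induced Mal'cev basis $\bar a_1,\ldots,\bar a_k$. Applying the induction hypothesis to $\bar G$, the first $k$ coordinates of $gh$ (and, for part (ii), of $g^l$) are already polynomial in the inputs. For the remaining $m-k$ coordinates, write $g = g_0 z_g$ and $h = h_0 z_h$ with $g_0 = a_1^{\gamma_1}\cdots a_k^{\gamma_k}$, $z_g = a_{k+1}^{\gamma_{k+1}}\cdots a_m^{\gamma_m}\in Z$, and analogously for $h$. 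Centrality of $Z$ gives $gh = (g_0 h_0)(z_g z_h)$, and since $Z$ is free abelian with basis $a_{k+1},\ldots,a_m$, the tail $z_g z_h$ contributes $\gamma_i + \delta_i$ to coordinate $i>k$. Thus the crux is to show that the normalization of $g_0 h_0$ inside $G$ (not merely in $\bar G$) produces coordinates in $Z$ that are polynomial in $\gamma_1,\ldots,\gamma_k,\delta_1,\ldots,\delta_k$.

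That final step is the heart of the proof and is handled by Hall's collection process applied to the relations (\ref{stdpolycyclic2})--(\ref{stdpolycyclic3}): each swap $a_j a_i \mapsto a_i a_j v_{ij}$ replaces an out-of-order adjacent pair with an in-order pair, at the cost of a word $v_{ij}$ whose letters all lie strictly deeper in the central series. A secondary induction on a lexicographic measure (counting uncollected positions weighted by central-series depth) shows the process terminates in a bounded number of steps, and tracking the exponents through each swap produces a residue in $Z$ whose coordinates are polynomial in the input exponents. Part (ii) proceeds analogously: the power $g^l$ is analyzed via the Hall--Petresco identity
\[
g^l \;=\; g_0^{\,l} z_g^{\,l} \prod_{i\ge 2} c_i^{\binom{l}{i}}, \qquad c_i \in G_i,
\]
after which the binomials $\binom{l}{i}$ are themselves polynomials in $l$, and (i) propagates polynomiality through the product. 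Part (iii) is an immediate consequence of projecting to the abelianization $G/G_2$: the first coordinate corresponds to the $a_1$-slot in the free abelian group $G_1/G_2$, where multiplication is addition and exponentiation is scalar multiplication, so $\coordsj{gh}{1} = \gamma_1 + \delta_1$ and $\coordsj{g^l}{1} = l\gamma_1$ with no correction from deeper commutators. The main obstacle is the rigorous polynomial bookkeeping across collection steps; this is the standard Hall argument and is carried out in detail in \cite{Hal69} and \cite{Kargapolov-Merzlyakov}.
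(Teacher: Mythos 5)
The paper does not actually prove this lemma: it is imported verbatim from the literature (\cite{Hal69, Kargapolov-Merzlyakov}), with a pointer to \cite{LGS98} for an explicit algorithm constructing the $p_i,q_i$. So there is no in-paper proof to compare against; what you have written is the standard textbook argument from the cited sources (induction on the nilpotency class, splitting off the central subgroup $G_c$, Hall collection for the $Z$-residue, Hall--Petresco for powers), and its overall structure is sound. Part (iii) is handled correctly, and your use of $\cT=\es$ (torsion-freeness of every $G_i/G_{i+1}$, hence no modular reduction of coordinates) is exactly where that hypothesis enters. Note that no reordering of the basis is needed: in the paper's setup the Mal'cev basis is already grouped along the central series.

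Two points deserve tightening. First, the Hall--Petresco display is misplaced: since $z_g$ is central, $(g_0 z_g)^l = g_0^{\,l} z_g^{\,l}$ holds exactly with no correction terms; the binomial-coefficient terms $c_i^{\binom{l}{i}}$ arise when you expand $g_0^{\,l}=(a_1^{\gamma_1}\cdots a_k^{\gamma_k})^{l}$ into normal form, not when you separate $g_0$ from $z_g$. (The $\binom{l}{i}$ are integer-valued polynomials in $l$, so polynomiality still follows, including for negative $l$.) Second, ``terminates in a bounded number of steps'' is only true for the \emph{symbolic} collection in which each block $a_i^{x}$ is treated as a single letter; a literal letter-by-letter collection takes a number of swaps depending on the exponents. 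For the symbolic version, each swap requires knowing that $a_j^{y}a_i^{x}=a_i^{x}a_j^{y}\,w_{ij}(x,y)$ with $w_{ij}$ having coordinates polynomial in $x,y$, and that fact is itself an instance of the lemma for the deeper (lower-class) subgroup $G_{n_j+1}$ -- so it must be folded into the induction rather than taken for granted. Your sketch acknowledges that this bookkeeping is the crux and defers it to \cite{Hal69} and \cite{Kargapolov-Merzlyakov}, which is acceptable here since the paper itself treats the lemma as a black box, but as a self-contained proof it leaves the central step unverified.
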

Notice that an explicit algorithm to construct the polynomials $p_i,q_i$ is given in \cite{LGS98}. For further background on nilpotent groups we refer to \cite{Hal69, Kargapolov-Merzlyakov}.

\section{Presentation of subgroups}\label{sec:subgroup}

Before we start with algorithmic problems, we introduce a canonical way how to represent subgroups of nilpotent groups. This is important for two reasons: first, of course we need it to solve the subgroup membership problem, and, second, for the uniform setting it allows us to represent nilpotent groups as free nilpotent group modulo a kernel which is represented as a subgroup. Let $h_{1},\ldots, h_{n}$ be elements of $G$ given in normal form by 
$h_{i} = a_{1}^{\alpha_{i1}}\cdots a_{m}^{\alpha_{im}}$, 
for $i=1,\ldots,n$, and let $H=\gen{ h_{1},\ldots,h_{n}}$.
We associate the \emph{matrix of coordinates}
\begin{equation}\label{Eqn:CoordinateMatrix}
A=
\left(\begin{array}{ccc}
\alpha_{11} &\cdots & \alpha_{1m} \\
\vdots&\ddots & \vdots\\
\alpha_{n1} & \cdots & \alpha_{nm}
\end{array}\right),
\end{equation}
to the tuple $(h_{1},\ldots,h_{n})$ and conversely, to any $n\times m$ integer matrix, we associate an $n$-tuple of elements of $G$, whose Mal'cev coordinates are given as the rows of the matrix, and the subgroup $H$ generated 
by the tuple.
For each $i=1,\ldots,n$ where row $i$ is non-zero, let $\pivot{i}$ be the column of the first non-zero entry (`pivot') in row $i$.
The sequence $(h_{1},\ldots,h_{n})$ is said to be in \emph{standard form} if the matrix of coordinates $A$ is in row-echelon form and its pivot columns are maximally reduced (similar to the Hermite normal form), more specifically, if $A$ satisfies the following properties:
\begin{enumerate}
	\item all rows of $A$ are non-zero (i.e. no $h_{i}$ is trivial),\label{li:std_nontrivial}
	\item $\pivot{1} < \pivot{2} < \cdots < \pivot{s}$ (where $s$ is the number of pivots),\label{li:std_echelon}
	\item $\alpha_{i \pivot{i}}>0$, for all $i=1,\ldots,n$,\label{li:std_positive}
	\item $0\leq \alpha_{k \pivot{i}} < \alpha_{i \pivot{i}}$, for all $1\leq k < i\leq s$ \label{li:std_reduced}
	\item  if $\pivot{i}\in\mathcal{T}$, then $\alpha_{i \pivot{i}}$ divides $e_{\pivot{i}}$, for $i=1,\ldots,s$.\label{li:std_torsion}
\end{enumerate}
The sequence (resp.\ matrix) is called {\em full} if in addition 
\begin{enumerate}
	\setcounter{enumi}{5}
	\item \label{li:std_full}  
	$H\cap \gen{ a_i,a_{i+1},\ldots, a_m}$ is generated by $\{h_{j}\mid \pi_j\ge i\}$, 
	for all $1\le i\le m$.
\end{enumerate}
Note that $\{h_{j}\mid \pi_j\ge i\}$ consists of those elements 
having 0 in their first $i-1$ coordinates.  
It is an easy exercise (see also \cite{MacDonaldMNV15}) to show that \ref{li:std_full} 
holds for a given $i$ if, and only if,
\begin{itemize}
	\item for all $1\leq k<j\leq s$ with $\pivot{k}<i$, 
	$h_{k}^{-1} h_{j} h_{k}$ and $h_{k} h_{j} h_{k}^{-1}$ are 
	elements of $\genr{ h_{l}}{l>k}$, and
	\item for all $1\leq k\leq s$ with $\pivot{k}<i$ and $\pivot{k}\in\mathcal{T}$, 
	$h_{k}^{e_{\pivot{k}}/\alpha_{k\pivot{k}}}\in \genr{ h_{l}}{ l>k}$.
\end{itemize}
We will use full sequences and the associated matrices in full form interchangeably without mentioning it explicitly. For simplicity we assume that the inputs of algorithms are given as matrices.
The importance of full sequences is described in the following lemma~-- a proof can be found in \cite{Sim94} Propositions 9.5.2 and 9.5.3.
\begin{lemma}[{\cite[Lem.\ 3.1]{MacDonaldMNV15}}]
	\label{lem:UniqueStandardForm}
	Let $H\leq G$.  There is a unique full sequence $U=(h_{1},\ldots,h_{s})$ that generates 
	$H$.  We have $s\leq m$ and 
	$
	H=\{h_{1}^{\beta_{1}}\cdots h_{s}^{\beta_{s}} \,|\, \beta_{i}\in\Z \mbox{ and $0\leq \beta_{i}<e_{\pivot{i}}$ if $\pivot{i}\in\mathcal{T}$}\}.
	$
\end{lemma}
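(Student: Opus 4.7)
My plan is to prove existence, then uniqueness, then the normal-form description.

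\textbf{Existence.} Starting from any generating tuple of $H$, I would perform a Hermite-style column-by-column reduction on the coordinate matrix $A$, using the following three operations on the generating tuple (each of which preserves the subgroup generated): (a) swap two generators; (b) replace $h_j$ by $h_i^k h_j$ or by $h_j h_i^{-k}$; and, when the current pivot column $p$ lies in $\mathcal{T}$, (c) adjoin the relator element $a_\ell^{\mu_{p\ell}}\cdots a_m^{\mu_{pm}}$ (which is trivial in $G$, by \eqref{stdpolycyclic1}, but contributes a row with leading entry in a column $>p$, and, combined via Euclidean reduction with $h_i$, forces the final pivot value $\alpha_{i p}$ to divide $e_p$). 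Proceeding from left to right, within each column I use Euclidean GCD combinations of the rows with pivot in that column to reduce to a single pivot row; then I reduce the entries of this column in the rows above by further multiplications. The crucial point that makes this work in a nilpotent (non-abelian) setting is that whenever we perform operation (b), the cost incurred in columns strictly to the right of the pivot is controlled by the commutator identities: if $a_j,a_i$ have pivots $p_j>p_i$, then by the series~\eqref{Eqn:CyclicSeries} the commutator $[a_j,a_i]\in G_{n_{p_i}+1}$, so rewriting only modifies coordinates beyond $p_i$. Hence once the pivot in column $p_i$ has been finalized, subsequent row operations on columns $>p_i$ do not disturb it. After sweeping through all columns, the resulting matrix satisfies conditions \ref{li:std_nontrivial}--\ref{li:std_torsion}; fullness \ref{li:std_full} then holds for free because every element of $H\cap\gen{a_i,\ldots,a_m}$ can be expressed in terms of the surviving generators by applying the same reduction.

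\textbf{Uniqueness.} Suppose $(h_1,\ldots,h_s)$ and $(h'_1,\ldots,h'_{s'})$ are both full sequences generating $H$. I would argue by downward induction on the pivot column: the smallest pivot $\pivot{1}$ is the least $i$ for which some element of $H$ has nonzero $i$-th Mal'cev coordinate, so $\pivot{1}=\pivot{1}'$; moreover $\alpha_{1\pivot{1}}$ equals the least positive $i$-th coordinate appearing in $H$ (this is the subgroup of $\Z$ or $\Z/e_{\pivot{1}}$ obtained by projecting $H$ to the $\pivot{1}$-th coordinate, and by fullness this is exactly $\alpha_{1\pivot{1}}\Z$ modulo $e_{\pivot{1}}$), so $\alpha_{1\pivot{1}}=\alpha'_{1\pivot{1}}$. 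By the fullness property \ref{li:std_full} applied with $i=\pivot{1}+1$, the tails $(h_2,\ldots,h_s)$ and $(h'_2,\ldots,h'_{s'})$ are both full sequences for $H\cap\gen{a_{\pivot{1}+1},\ldots,a_m}$, so by induction they coincide. Finally, the remaining coordinates of $h_1$ and $h'_1$ must agree because condition \ref{li:std_reduced} pins them down modulo the (now-determined) pivots to the right. The bound $s\leq m$ is immediate from strict monotonicity \ref{li:std_echelon} of the pivot columns.

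\textbf{Normal form.} For any $h\in H$, I would peel off one pivot at a time. Let $i$ be minimal with $\coordsj{h}{i}\neq 0$; by fullness $i\geq \pivot{1}$ and in fact $i=\pivot{k}$ for some $k$. Since $h\in H\cap\gen{a_{\pivot{k}},\ldots,a_m}$ is generated by $h_k,\ldots,h_s$ (condition \ref{li:std_full}), and since modulo $\gen{a_{\pivot{k}+1},\ldots,a_m}$ the element $h_k$ has order $e_{\pivot{k}}/\alpha_{k\pivot{k}}$ while $h_{k+1},\ldots,h_s$ vanish there, one can choose the unique $\beta_k$ in the prescribed range so that $h_k^{-\beta_k}h$ has trivial $\pivot{k}$-th coordinate. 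Iterating yields the existence of the expression; uniqueness follows because, reading the pivot coordinates from left to right and using that the coordinates of $h_k^{\beta_k}\cdots h_s^{\beta_s}$ at column $\pivot{k}$ equal $\beta_k\alpha_{k\pivot{k}}$ modulo $e_{\pivot{k}}$, each $\beta_k$ is forced.

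\textbf{Main obstacle.} The subtle step is Existence: tracking how the non-abelian rewriting in operations (b) and (c) perturbs coordinates to the right of the current pivot, so that the procedure does terminate in standard form. This is exactly where the central series hypothesis is used, via the observation that any commutator produced lies deeper in the series and hence only affects later coordinates. Uniqueness and the normal form are comparatively mechanical given fullness.
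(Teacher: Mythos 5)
The paper itself does not prove this lemma: it is imported from \cite{MacDonaldMNV15} with the proof attributed to Sims (Propositions 9.5.2 and 9.5.3 of \cite{Sim94}); the closest in-paper analogue of your existence argument is the matrix reduction of Section~\ref{se:matrix_reduction}. Your uniqueness argument (induction on the pivot columns, using that $\pi_1$ and $\alpha_{1\pi_1}$ are determined by $H$ and that the tails are full sequences for $H\cap\langle a_{\pi_1+1},\ldots,a_m\rangle$) and your normal-form argument (peeling off one pivot at a time) are sound in outline, and $s\le m$ is indeed immediate from \ref{li:std_echelon}.

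The genuine gap is in the existence step, at the sentence claiming that fullness~\ref{li:std_full} ``holds for free'' once \ref{li:std_nontrivial}--\ref{li:std_torsion} are achieved. That is false. Take $G$ the discrete Heisenberg group with Mal'cev basis $(a_1,a_2,a_3)$ where $a_3=[a_1,a_2]$ is central, and $H=\langle a_1,a_2\rangle=G$. The coordinate matrix with rows $(1,0,0)$ and $(0,1,0)$ satisfies \ref{li:std_nontrivial}--\ref{li:std_torsion} and is exactly what your left-to-right sweep outputs, yet $H\cap\langle a_3\rangle=\langle a_3\rangle\ne 1$ is not generated by the (empty) set of rows with pivot $\ge 3$; the actual full sequence for $H$ is $(a_1,a_2,a_3)$. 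The point is that elements of $H\cap\langle a_i,\ldots,a_m\rangle$ can arise as commutators of earlier generators and need not be products of the later rows. This is precisely why Step~4 of the reduction in Section~\ref{se:matrix_reduction} explicitly adjoins the conjugates $h_k^{-\ell}h_jh_k^{\ell}$ for $1\le|\ell|\le m-\pi_k$ and the torsion powers $h_k^{e_{\pi_k}/\alpha_{k\pi_k}}$, invoking the closure criterion for \ref{li:std_full} stated right after the definition; your proof needs this step, together with the observation that the adjoined rows have pivots strictly to the right of $\pi_k$ so the procedure still terminates. A minor further point: in your normal-form argument $\beta_k$ is unique only modulo $e_{\pi_k}/\alpha_{k\pi_k}$ (the order of $h_k$ modulo $\langle h_{k+1},\ldots,h_s\rangle$), not within $0\le\beta_k<e_{\pi_k}$; this does not affect the set equality asserted in the lemma, but the word ``unique'' should be dropped or the range corrected.
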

Thus, computing a full sequence will be the essential tool for solving the subgroup membership problem. Before we focus on subgroup membership, we will first solve the word problem and introduce how the nilpotent group can be part of the input.

\subsection{Quotient presentations}
\label{Section:Uniform1}
\label{sec:presentation}

Let $c,r\in \N$ be fixed. The free nilpotent group $F_{c,r}$  of class $c$ and rank $r$ is defined as
$F_{c,r} = \genr{a_1, \dots, a_r}{[x_1, \dots , x_{c+1}] = 1 \text{ for } x_1, \dots,x_{c+1} \in F_{c,r} }$ where  $[x_1, \dots , x_{c+1}] = [[x_1, \dots , x_{c}],x_{c+1}]$, \ie $F_{c,r}$ is the $r$-generated group only subject to the relations that weight $c+1$ commutators are trivial. 
Throughout, we fix a Mal'cev basis $A = (a_1, \dots, a_m)$ (which we call the \emph{standard Mal'cev basis}) associated to the lower central series of $F_{c,r}$ such that the associated nilpotent presentation consists only of relations of the form (\ref{stdpolycyclic2}) and (\ref{stdpolycyclic3}) (\ie $\cT=\emptyset$~-- such a presentation exists since $F_{c,r}$ is torsion-free), $a_1, \dots, a_r$ generates $F_{c,r}$, and all other Mal'cev generators are iterated commutators of $a_1, \dots, a_r$. 

Denote by $\Ncr$ the set of $r$-generated nilpotent groups of class at most~$c$. 
Every group $G\in \Ncr$ is a quotient of the free nilpotent group $F_{c,r}$, \ie $G= F_{c,r}/N$ for some normal subgroup $N \leq F_{c,r}$.
Assume that $T=(h_{1},\ldots,h_{s})$ is a full sequence generating $N$. Adding $T$ to the set of relators of the free nilpotent group yields a new nilpotent presentation. This presentation will be called \emph{\qmp} of $G$. For inputs of algorithms, we assume that a \qmp is always given as its matrix of coordinates in full form. Depending whether the entries of the matrix are encoded in unary or binary, we call the \qmp be given in \emph{unary} or \emph{binary}.

\begin{lemma}[{\cite[Prop.\ 5.1]{MacDonaldMNV15}}]\label{lem:quotient_presentation}
	Let $c$ and $r$ be fixed integers and let $A = (a_1, \dots, a_m)$ be the standard Mal'cev basis of $F_{c,r}$. Moreover, denote by $S$ the set of relators of $F_{c,r}$ with respect to $A$. 
	Let $G \in \Ncr$ with $G = F_{c,r}/N$ and let $T$ be the full-form sequence for the subgroup $N$ of $F_{c,r}$. Then, $\genr{A}{S \cup T}$ is a consistent nilpotent presentation of $G$.
\end{lemma}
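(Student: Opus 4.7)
The plan is to verify the claim in three steps: first, that $\genr{A}{S \cup T}$ presents $G$; second, that every relator fits one of the forms (\ref{stdpolycyclic1})--(\ref{stdpolycyclic3}); third, that the resulting presentation is consistent.

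Step one is quick. Since $\genr{A}{S}$ presents $F_{c,r}$, the group defined by $\genr{A}{S \cup T}$ is $F_{c,r}$ modulo the normal closure of $T$. By construction $T$ generates $N$ as a subgroup (\prref{lem:UniqueStandardForm}), and $N$ is already normal in $F_{c,r}$, so this normal closure is exactly $N$ and the presented group is $G$.

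For step two, the relators in $S$ have the forms (\ref{stdpolycyclic2}) and (\ref{stdpolycyclic3}) and still hold in the quotient. Each $h_k \in T$ has the Mal'cev form $h_k = a_{p_k}^{\alpha_{k,p_k}} a_{p_k+1}^{\alpha_{k,p_k+1}} \cdots a_m^{\alpha_{k,m}}$, and the relation $h_k = 1$ in $G$ rewrites as a power-type relation for $a_{p_k}$ with exponent $e_{p_k} = \alpha_{k,p_k}$ of the form (\ref{stdpolycyclic1}) (using that $e_{ij} = 1$ is explicitly permitted). Non-pivot column indices $i$ carry no new relation, so $e_i = \infty$ for them. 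A small subtlety is that the right-hand side of the power relation derived from $h_k$ may initially contain generators of the same weight as $a_{p_k}$, so the commutator relations in $S$ may have to be applied to re-express it in the strict form (\ref{stdpolycyclic1}); this is routine rewriting, since same-weight generators can be moved past $a_{p_k}$ at the cost of strictly higher-weight corrections.

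For step three (consistency), I need to check that the order of $a_i$ modulo $\gen{a_{i+1}, \ldots, a_m}$ in $G$ equals $e_i$. Translating to $F_{c,r}$: $a_i^d \in \gen{a_{i+1}, \ldots, a_m}$ in $G$ iff $a_i^d w^{-1} \in N \cap \gen{a_i, \ldots, a_m}$ for some $w \in \gen{a_{i+1}, \ldots, a_m}$. By the fullness condition, $N \cap \gen{a_i, \ldots, a_m}$ is generated by $\{h_j : p_j \geq i\}$, and the echelon plus reducedness conditions on the coordinate matrix of $T$ force the minimal positive such $d$ to equal $\alpha_{k,p_k}$ when $i = p_k$, and to not exist when $i$ is not a pivot. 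I expect this to be the main obstacle: extracting the precise order from the abstract fullness property requires carefully tracking how the $i$-th Mal'cev coordinate of an arbitrary product of the $h_j$'s evolves (using \prref{lem:malcev}), so as to certify that no smaller positive power of $a_i$ sneaks into $N \cdot \gen{a_{i+1}, \ldots, a_m}$.
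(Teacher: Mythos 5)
Your proposal follows essentially the same route as the paper: identify the presented group as $G$, observe that the elements of $T$ contribute exactly the power relators (\ref{stdpolycyclic1}) while $S$ supplies (\ref{stdpolycyclic2})--(\ref{stdpolycyclic3}), and derive consistency from the structure of the full sequence. The step you flag as ``the main obstacle'' is dispatched in the paper by Lemma~\ref{lem:UniqueStandardForm} combined with the additivity of the leading coordinate (Lemma~\ref{lem:malcev}(iii), applied inside $\gen{a_i,\ldots,a_m}$): any element of $N$ whose first $i-1$ coordinates vanish is a product $\prod_{\pi_j\ge i} h_j^{\beta_j}$, whose $i$-th coordinate equals $\beta_k\alpha_{k\pi_k}$ for the unique pivot row $k$ with $\pi_k=i$ (and $0$ if column $i$ carries no pivot), so the pivot entry divides the order of $a_i$ modulo $\gen{a_{i+1},\ldots,a_m}$ and no smaller positive power can lie in $N\cdot\gen{a_{i+1},\ldots,a_m}$.
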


\begin{proof}
	Clearly, we have $G\simeq \langle A\mid S\cup T\rangle$.  Since $\langle A\mid S\rangle$ is a nilpotent 
	presentation and the elements of $T$ add relators of the form (\ref{stdpolycyclic1}), 
	the presentation is nilpotent. To prove that it is consistent, suppose some 
	$a_i\in A$ has order $\alpha_i$ modulo $\langle a_{i+1},\ldots,a_m\rangle$ in 
	$\langle A\mid S\cup T\rangle$. Since the order is infinite in $F$, there must 
	be element of the form $a_i^{\alpha_i} a_{i+1}^{\alpha_{i+1}}\cdots a_m^{\alpha_m}$ in 
	$N$. But then, by \prettyref{lem:UniqueStandardForm}, $T$ must contain an element $a_i^{\alpha'_i} a_{i+1}^{\alpha'_{i+1}}\cdots a_m^{\alpha'_m}$ where $\alpha'_i$ divides $\alpha_i$.  Hence $\alpha_i$ cannot be smaller than $\alpha'_i$ 
	and so the presentation is consistent.
\end{proof}

 For the following we always assume that a \qmp is part of the input, but $c$ and $r$ are fixed. Later, we will show how to compute \qmps from an arbitrary presentation.

\begin{remark}
	Lemma~\ref{lem:quotient_presentation} ensures that each group element has a unique normal form with respect to the \qmp; thus, it guarantees that all our manipulations of Mal'cev coordinates are well-defined.
\end{remark}

\section{Word problem and computation of Mal'cev coordinates}\label{sec:WP}

In this section we deal with the word problem of nilpotent groups, which is well-known to be in \TC \cite{Robinson93phd}. Here, we generalize this result by allowing words with binary exponents (recall that \emph{word with binary exponents} is a sequence $w=w_1^{x_1} \cdots w_n^{x_n}$ where $w_i \in \oneset{a_1, \dots, a_m}$  and the $x_i \in \Z$). 
By using words with binary exponents the input can be compressed exponentially~-- making the word problem, a priori, harder to solve.
Nevertheless, it turns out that the word problem still can be solved in \TC when allowing the input to be given as a word with binary exponents. Note that this contrasts with the situation where the input is given as straight-line program (which like words with binary exponents allow an exponential compression)~-- then the word problem is complete for the counting class $\mathsf{C}_=\mathsf{L}$ \cite{KoenigL15}.

\begin{theorem}\label{thm:nilpotentExpWP}
	Let $c,r  \geq 1 $ be fixed and let $(a_1, \dots, a_m)$ be the standard Mal'cev basis of $F_{c,r}$. The following problem is \TC-complete: on input of
	 \begin{itemize}
		\item 
	$G \in \Ncr$ given as a binary encoded \qmp and
		\item 
	a word with binary exponents $w=w_1^{x_1} \cdots w_n^{x_n}$,
		\end{itemize}
	compute integers $y_1, \dots, y_m$ (in binary) such that $w = a_1^{y_1} \cdots a_m^{y_m}$ in $G$ and $0\leq y_i < e_i$ for $i \in \cT$. 
	Moreover, if the input is given in unary (both $G$ and $w$), then the output is in unary.
\end{theorem}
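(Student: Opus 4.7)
The plan is to proceed in three stages: compute the Mal'cev coordinates of $w$ in the free nilpotent group $F_{c,r}$; reduce the result modulo the normal subgroup $N$ specified by the \qmp; and then handle \TC-hardness and the unary addendum separately.

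For the first stage, each factor $w_j^{x_j}$ (with $w_j=a_{i_j}$ a Mal'cev generator) is already in normal form in the torsion-free group $F_{c,r}$, with coordinate vector carrying $x_j$ in position $i_j$ and zeros elsewhere, so nothing is to be done there. The substantive work is to compute the Mal'cev coordinates of $P=\prod_{j=1}^n a_{i_j}^{x_j}$ in $F_{c,r}$. Iterating Lemma~\ref{lem:malcev} and using that basic commutators of weight $>c$ vanish, I would argue that each coordinate $\coordsj{P}{k}$ is a polynomial in $x_1,\dots,x_n$ of total degree at most $c$ of the form
\[
\coordsj{P}{k}=\sum_{t=1}^{c}\ \sum_{\substack{1\le j_1<\cdots<j_t\le n\\ e_1+\cdots+e_t\le c,\; e_i\ge 1}} C^k_{(w_{j_1},\dots,w_{j_t}),\vec e}\cdot x_{j_1}^{e_1}\cdots x_{j_t}^{e_t},
\]
where the coefficients $C^k$ are rationals of fixed bounded denominator (say dividing $c!$), depending only on the constantly many relevant generators $w_{j_1},\dots,w_{j_t}$ and on fixed data from the presentation of $F_{c,r}$. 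Since there are only $O(n^c)$ monomials, each a product of $O(1)$ integers of polynomial bit length, the sum is evaluable by iterated multiplication, iterated addition, and a single final division by the common denominator, all in \TC by Theorem~\ref{thm:additionTC}.

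For the second stage, let $T=(h_1,\dots,h_s)$ be the full sequence representing $N$, with pivots $\pivot{1}<\cdots<\pivot{s}$ and pivot values $\alpha_i$. Processing pivots from left to right, I would set $k_i:=\lfloor y_{\pivot{i}}/\alpha_i\rfloor$ and right-multiply the current element by $h_i^{-k_i}$, whose coordinates are obtained from the polynomial $q$ of Lemma~\ref{lem:malcev}(ii). Since $h_i\in\langle a_{\pivot{i}},\dots,a_m\rangle$, this leaves the earlier coordinates unchanged, subtracts $k_i\alpha_i$ from $y_{\pivot{i}}$, and updates the remaining coordinates through the polynomials $p_k$ of Lemma~\ref{lem:malcev}(i). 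The resulting vector then satisfies $0\le y_{\pivot{i}}<\alpha_i=e_{\pivot{i}}$ at torsion pivots (cf.\ Lemma~\ref{lem:UniqueStandardForm} applied via Lemma~\ref{lem:quotient_presentation}). Because $s\le m$ is a constant depending only on $c$ and $r$, this is a constant-depth composition of \TC stages and therefore still in \TC.

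\TC-hardness is immediate: take $G=F_{c,r}$ with empty \qmp and observe that the word problem of $\Z=\langle a_1\rangle$ is already \TC-complete (Example~\ref{ex:countTC}, using the standard \TC-hardness of counting). For the unary version, all $|x_j|$ are bounded by the input length $n$, so intermediate and output coordinates have magnitude at most $n^{O(c)}$; this is polynomial in the unary input size, and converting the resulting binary integers back to unary is itself in \TC. I expect the main obstacle to be pinning down the polynomial formula in the first stage: the classical ``collection process'' and Hall--Petresco arguments give such formulas for fully expanded words, and some care is needed to argue that their degree and coefficient structure survive the passage to binary exponents, so that the resulting sum is genuinely a \TC-evaluable expression of polynomial size.
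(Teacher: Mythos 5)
Your proof takes a genuinely different route from the paper's. The paper never writes down a closed-form collection polynomial for the whole product: it collects one Mal'cev generator at a time, in $m=O(1)$ rounds, where each round uses only the \emph{two-variable} polynomials of Lemma~\ref{lem:malcev} (pushing all $a_1$'s to the left in parallel via $a_k^{x}a_1^{z}=a_1^{z}a_k^{x}a_{k+1}^{p_{k,k+1}(x,z)}\cdots$, then applying the power relation of the \qmp once), and interleaves the reduction modulo the torsion exponents $e_i$ with the collection. Your Stages~2--4 (reduction modulo $N$ pivot by pivot, hardness via $\Z$, and the unary addendum) are correct and essentially match what the paper does in Theorem~\ref{thm:logspace_membership} and in the proof itself.

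The one genuine gap is exactly the one you flag: Stage~1 rests on the claim that every coordinate of $\prod_j a_{i_j}^{x_j}$ in $F_{c,r}$ is a degree-$\le c$ polynomial in $x_1,\dots,x_n$ whose monomial coefficients depend only on the tuple of generators at the chosen positions (and their relative order), not on the positions themselves, with uniformly bounded denominators. This does \emph{not} follow from iterating Lemma~\ref{lem:malcev}, which by itself gives no control on how the degree grows with $n$; it is a Hall--Petresco/Magnus-embedding type statement (e.g.\ map $a_i\mapsto 1+A_i$ into the free associative algebra truncated at degree $c+1$, expand $(1+A_i)^{x}=\sum_e\binom{x}{e}A_i^e$, and observe that the coefficient of each word is a sum of products of binomials indexed by increasing position tuples, after which a fixed triangular change of variables recovers the Mal'cev coordinates). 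The statement is true and your \TC evaluation of the resulting $O(n^c)$-term sum is then fine, but as written the load-bearing lemma is asserted rather than proved, and it is precisely the step the paper's round-by-round collection is designed to avoid. To make your argument complete you would need to either supply this lemma (the Magnus-embedding argument above does it) or cite it; alternatively, replacing Stage~1 by the paper's $m$-round pairwise collection removes the need for it entirely.
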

Note that the statement for unary inputs is essentially the one of \cite{Robinson93phd}.
Be aware that in the formulation of the theorem, $\cT$ and $e_i$ for $i\in \cT$ depend on the input group $G$. These parameters can be read from the full matrix $ (\alpha_{ij})_{i,j}$ of coordinates representing $G$ (recall that $\pi_i$ denotes the column index of the $i$-th pivot and here $s$ is the number of rows of the matrix):
 \[\cT = \set{\pi_i}{i\in \oneset{1, \dots, s}}\] 
 (all columns which have a pivot)  and $e_i = \alpha_{ji}$ if $\pi_j = i$. 
As an immediate consequence of Theorem~\ref{thm:nilpotentExpWP}, we obtain:
\begin{corollary}\label{cor:nilpotentExpWP}
	Let $c,r \geq 1  $ be fixed. 
	The uniform, binary version of the word problem for groups in $\Ncr$ is \TC-complete (where the input is given as in Theorem~\ref{thm:nilpotentExpWP}). 
\end{corollary}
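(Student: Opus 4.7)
The plan is to deduce this corollary essentially for free from Theorem~\ref{thm:nilpotentExpWP}. For the upper bound, feed the input pair $(G, w)$ into the circuit of the theorem to obtain, in \TC, the Mal'cev coordinates $y_1, \dots, y_m$ of $w$ in $G$. By Lemma~\ref{lem:quotient_presentation} the \qmp is consistent, so the normal form is unique and $w = 1$ in $G$ if and only if every $y_i$ equals $0$. Checking that a list of binary integers is all zero is the negation of a big OR of output bits, which lies in $\Ac0 \subseteq \TC$; composing with the \TC circuit from the theorem keeps the whole computation in \TC, both in the unary and binary case.

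For \TC-hardness I would pick a single group in $\Ncr$ whose word problem is already \TC-hard and observe that its \qmp is of constant size, since $c$ and $r$ are fixed, hence can be hard-coded into the reduction circuit. The natural choice is $\Z$: because $c, r \geq 1$, the cyclic group $\Z$ is realized as $F_{c,r}/N$ where $N$ is the subgroup generated by $a_2, \dots, a_r$ together with every weight-$\geq 2$ Mal'cev generator, and the associated full matrix of coordinates does not depend on the input length. For the unary input variant, the word problem of $\Z$ is exactly the question whether the number of $+1$'s equals the number of $-1$'s in the input word over $\{a_1^{\pm1}\}$, which is the canonical exact-counting (majority) problem highlighted in Example~\ref{ex:countTC} and is \TC-hard. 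For the binary variant there is nothing further to do: a unary input of length $n$ can trivially be written as a word with binary exponents in which every exponent is $\pm 1$, so the unary word problem is an $\Ac0$-reduction special case of the binary word problem, and \TC-hardness transfers.

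The only potential obstacle worth mentioning is uniformity of the hardness reduction: the reduction circuit must produce the \qmp of the target group $\Z \in \Ncr$ in addition to transforming the raw input into a word over its generators. However, since $c$ and $r$ are fixed, the full matrix of coordinates for $N$ is a constant-size object independent of the input, so both the ambient group description and the encoding of the input as a $\Z$-word are produced by constant-depth, constant-size circuitry~-- hence trivially an $\Ac0$ (and a fortiori \TC) reduction.
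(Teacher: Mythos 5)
Your proposal is correct and follows essentially the same route as the paper, which states the corollary as an immediate consequence of Theorem~\ref{thm:nilpotentExpWP}: the upper bound is the composition of the coordinate-computation circuit with an all-zero test, and the hardness is exactly the paper's remark that the word problem of $\Z$ (realized as a constant-size \qmp since $c,r$ are fixed) is already \TC-complete. Your additional care about uniformity of the hard-coded \qmp is a reasonable elaboration of what the paper leaves implicit.
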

The proof  of Theorem~\ref{thm:nilpotentExpWP} follows the outline given in Section~\ref{sec:nilpotent}; however, we cannot apply the rules (\ref{stdpolycyclic1})--(\ref{stdpolycyclic3}) one by one. Instead we make only two steps for each generator: first apply all possible rules (\ref{stdpolycyclic2}) and (\ref{stdpolycyclic3}) in one step and then apply the rules (\ref{stdpolycyclic1}) in one step.
\begin{proof}[Proof of Theorem~\ref{thm:nilpotentExpWP}]	
	The hardness part is clear since already the word problem of $\Z$ is \TC-complete.
	For describing a \TC circuit, we proceed by induction along the standard Mal'cev basis $(a_1, \dots, a_m)$ of the free nilpotent group $F_{c,r}$. If $w$ does not contain any letter $a_1$, we have $y_1 = 0$ and we can compute $y_i$ for $i>1$ by induction.
	
	Otherwise, we rewrite $w $ as $ a_1^{y_1} uv$ (with $0\leq y_1 < e_1$ if $1 \in \cT$) such that $u$ and $v$ are words with binary exponents not containing any $a_1$s. Once this is completed, $uv$ can be rewritten as $a_2^{y_2} \cdots a_m^{y_m}$ by induction. 
	For computing $y_1$, $u$ and $v$, we proceed in two steps:
	
	First, we rewrite $w $ as $ a_1^{\tilde y_1} v$ with $\tilde y_1 = \sum_{w_i = a_1} x_i$
	(this is possible by Lemma~\ref{lem:ab_operation} (iii)).
	The exponent $\tilde y_1$ can be computed by iterated addition, which by Theorem~\ref{thm:divisionTC} is in \TC (in the unary case $\tilde y_1$ can be written down in unary). Now, $v$ consists of what remains from $w$ after $a_1$ has been ``eliminated'': for every position $i$ in $w$ with $w_i \neq a_1$, we compute $z_i = \sum_{\stackrel{j>i}{w_j = a_1}}  x_j$ using iterated addition. Let $w_i = a_k$. By Lemma~\ref{lem:malcev} (i) there are fixed polynomials $p_{k,k+1}, \dots, p_{k,m} \in \Z[x,y]$  such that in the free nilpotent group holds
	\[a_k^{x}a_1^y = a_1^y a_k^x\, a_{k+1}^{p_{k,k+1}(x,y)} \cdots a_{m}^{p_{k,m}(x,y)} \qquad\text{ for all } x,y \in \Z. \]
	Hence, in order to obtain $\tilde w$, it remains to replace every $w_i^{x_i}$ with $w_i = a_1$ by the empty word and every $w_i^{x_i}$ with $w_i= a_k \neq a_1$ by $ a_k^{x_i} a_{k+1}^{p_{k,k+1}(x_i,z_i)} \cdots a_{m}^{p_{k,m}(x_i,z_i)}$, which is a word with binary exponents  (resp.\ as a word  of polynomial length in the unary case), for $k= 2, \dots, m$. The exponents can be computed in \TC by Theorem~\ref{thm:divisionTC}. Since the $p_{k,i}$ are bounded by polynomials, in the unary case, $ a_k^{x_i} a_{k+1}^{p_{k,k+1}(x_i,z_i)} \cdots a_{m}^{p_{k,m}(x_i,z_i)}$ can be written as a word without exponents. 
	
	The second step is only applied if $1 \in \cT$ (as explained above, this can be decided and $e_i$ can be read directly from the \qmp by checking whether there is a pivot in the first column)~-- otherwise $y_1= \tilde y_1$ and $u$ is the empty word. We rewrite $a_1^{\tilde y_1}$ to $ a_1^{y_1} u$ with $y_1 = \tilde y_1 \bmod e_1$ and a word with binary exponents $u$ not containing any $a_1$. Again $y_1$ can be computed in \TC by Theorem~\ref{thm:divisionTC}.
	Let $a_{1}^{e_{1}} = a_{2}^{\mu_{12}}\cdots a_{m}^{\mu_{1m}}$ be the power relation for $a_1$ (which can be read from the \qmp~-- it is just the row where the pivot is in the first column) and write $\tilde y_1 = s\cdot e_1 +  y_1$. Now, $u$ should be equal to $(a_{2}^{\mu_{12}}\cdots a_{m}^{\mu_{1m}})^s$ in $F_{c,r}$. 
	We use the fixed polynomials $q_i \in \Z[x_1, \dots, x_m,z]$ from Lemma~\ref{lem:malcev} (ii) for $F_{c,r}$ 
	yielding \[u = a_{2}^{q_2(0,\mu_{12}, \dots ,\mu_{1m},s)}\cdots a_{m}^{q_m(0,\mu_{12}, \dots, \mu_{1m},s)}\] (which, in the binary setting, is a word with binary exponents, and in the unary setting a word without exponents of polynomial length).
	Now, we have $w= a_1^{y_1} uv$ in $G$ as desired.
\end{proof}

\section{The extended gcd problem}\label{sec:gcd}

Computing greatest common divisors and expressing them as a linear combination is an essential step for solving the subgroup membership problem. Indeed, consider the nilpotent group $\Z$ and let $a,b,c \in \Z$. Then $c \in \gen{ a,b}$ if, and only if, $\gcd(a,b) \mid c$.

\subparagraph*{Binary gcds.}
The (binary) \emph{extended gcd problem} (\ExtGCD) is as follows: on input of binary encoded numbers $a_1, \dots, a_n \in \Z$, compute $x_1 , \dots, x_n \in \Z$ such that \[x_{1} a_{1} + \dots + x_{n} a_{n} =  \gcd(a_{1},\ldots,a_{n}).\]	
Clearly this can be done in \P using the Euclidean algorithm, but it is not known whether it is actually in \NC.
Since we need to compute greatest common divisors, we will reduce the subgroup membership problem to the computation of gcds.

\subparagraph*{Unary gcds.}

Computing the $\gcd$ of numbers encoded in unary is straightforward in \TC by an exhaustive search; yet,
it is not obvious how to express $\gcd(a_1, \dots, a_n)$ as $x_{1} a_{1} + \dots + x_{n} a_{n}$ in \TC.
By \cite{MH94} such $x_i$ with $|x_{i}| \leq \frac{1}{2}\max\{|a_{1}|,\ldots, |a_{n}|\}$ can be computed in \L.
However, that algorithm uses a logarithmic number of rounds each depending on the outcome of the previous one~-- so it does not work in \TC. Note that for $n=2$ the problem is easy:

\begin{example}\label{ex:gcdeasy}
	Let $a,b \in \Z$. Then, there are $x,y \in \Z$ with $\abs{x}, \abs{y} \leq \max\oneset{\abs{a}, \abs{b}}$ such that $ax +by = \gcd(a,b)$. 	
	This is easy to see: assume $a,b >0$ (the other cases are similar) and we are given $x,y$ with $ax +by = \gcd(a,b)$ and $x \geq b$, then we can replace $x$ with $x - b$ and $y$ with $y + a$. This does not change the sum and by iterating this step, we can assure that $0\leq x  < b$. Then we have $y= - \frac{ax - \gcd(a,b)}{b}$; hence, $-a < y \leq 1$.
	
	If $a$ and $b$ are given in unary, the coefficients $x,y$ can be computed in \TC by simply checking all (polynomially many) values for $x$ and $y$ with $\abs{x}, \abs{y} \leq \max\oneset{\abs{a}, \abs{b}}$.
\end{example}

However, if we want to express the $\gcd$ of unboundedly many numbers $a_i$ as a linear combination, we cannot check all possible values for $x_1, \dots, x_n$ in \TC because there are $\max\{|a_{1}|^n,\ldots, |a_{n}|^n\}$ (\ie exponentially) many. 
Expressing the gcd as a linear combination can be viewed as a linear equation with integral coefficients. Recently, in \cite[Thm.\ 3.14]{ElberfeldJT11} it has been shown that, if all the coefficients are given in unary, it can be decided in \TC whether such an equation or a system of a fixed number of equations has a solution. 
Since from the proof of \cite[Thm.\ 3.14]{ElberfeldJT11} it is not obvious how to find an actual solution, we prove the following result:

\begin{theorem}\label{thm:GCD}
	The following problem is in \TC:
	Given integers $a_{1},\ldots,a_{n}$ as unary numbers, compute $x_1, \dots, x_n \in \Z$ (either in unary or binary) such that
	\[
	x_{1} a_{1} + \cdots + x_{n} a_{n} =  \gcd(a_{1},\ldots,a_{n})
	\]
	with $|x_{i}| \leq (n+1) \left(\max\{|a_{1}|,\ldots, |a_{n}|\}\right)^2$.
\end{theorem}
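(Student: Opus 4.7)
The plan is to compute $d = \gcd(a_1, \ldots, a_n)$ first by exhaustive search and then produce the Bezout coefficients via a closed-form expression involving iterated products modulo $a_n$, an operation that fits in \TC even though the corresponding integer products are exponentially large. Throughout write $M = \max_j |a_j|$; by reordering indices and flipping signs (both trivial in \TC) we may assume $a_i \ge 0$ and $a_n \ge 1$ (the all-zero case is trivial).

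First compute $d$: for each candidate $d' \in \{1, \ldots, M\}$ test in parallel whether $d' \mid a_i$ for every $i$ (divisibility of unary integers is in \TC by iterated addition, Theorem~\ref{thm:divisionTC}) and output the largest successful candidate. The same idea yields, in parallel, the iterated gcds $d_0 := a_n$ and $d_j := \gcd(a_n, a_1, \ldots, a_j)$ for $1 \le j \le n-1$; they satisfy $d_j = \gcd(d_{j-1}, a_j)$ and $d_{n-1} = d$. In parallel, for each $j \in \{1, \ldots, n-1\}$, compute pair-Bezout coefficients $\alpha_{j-1}, \beta_{j-1} \in \Z$ with $\alpha_{j-1} d_{j-1} + \beta_{j-1} a_j = d_j$ and $|\alpha_{j-1}|, |\beta_{j-1}| \le M$ by Example~\ref{ex:gcdeasy} (bounded enumeration, in \TC).

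Unfolding the recurrence $d_j = \alpha_{j-1} d_{j-1} + \beta_{j-1} a_j$ gives the closed form
\[
d \;=\; d_{n-1} \;=\; \Bigl(\prod_{j=0}^{n-2} \alpha_j\Bigr) a_n \;+\; \sum_{k=1}^{n-1} \Bigl(\prod_{j=k}^{n-2} \alpha_j\Bigr) \beta_{k-1}\, a_k,
\]
whose first summand vanishes modulo $a_n$. Accordingly, set
\[
x_k \;:=\; \Bigl(\prod_{j=k}^{n-2} \alpha_j\Bigr)\beta_{k-1} \bmod a_n \;\in\; [0, a_n) \qquad (k = 1, \ldots, n-1),
\]
and $x_n := \bigl(d - \sum_{k<n} x_k a_k\bigr)/a_n$. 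The congruence above guarantees $x_n \in \Z$, and $\sum_k x_k a_k = d$ holds by construction. The size bounds are immediate: $|x_k| < a_n \le M$ for $k<n$, and $|x_n| \le (d + (n-1) a_n M)/a_n \le n M$, so $|x_i| \le n M \le (n+1) M^2$ for all $i$.

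The main obstacle I expected was the exponential blow-up of Bezout coefficients under any naive sequential unfolding of the extended Euclidean algorithm. The crucial point that sidesteps this is that the first $n-1$ coefficients are only needed modulo $a_n$: iterated products of polynomially-bounded integers and their reduction modulo $a_n$ are in \TC by Theorem~\ref{thm:divisionTC} (as are the final iterated sum and division), so no exponentially large integer is ever explicitly formed. Hence the entire construction fits in \TC.
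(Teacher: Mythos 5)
Your proof is correct, and its decisive step is genuinely different from the paper's. Both arguments open identically: compute the iterated gcds $d_j$ by exhaustive search over unary candidates, obtain bounded pair-Bezout coefficients by the enumeration of Example~\ref{ex:gcdeasy}, and unfold the recurrence into a telescoping product of these coefficients (exponentially large values, but with polynomially many bits, so iterated multiplication and division from Theorem~\ref{thm:divisionTC} suffice). The divergence is in how the coefficients are then made small, which is the heart of the theorem. The paper keeps the huge coefficients $x_i = z_i \prod_{j>i} y_j$ and runs a rebalancing procedure: it measures how far each $x_i a_i$ overshoots in units of $A^2$, pairs positive overshoots against negative ones through an explicit combinatorial matching $p_{j,i}$ (with the bookkeeping of Lemmas~\ref{lem:pnbound} and~\ref{lem:pij}), and transfers multiples of $a_i a_j$ between matched indices to land at $|x_i| \le (n+1)A^2$; this occupies most of the proof. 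You instead anchor the gcd chain at $a_n$, reduce the first $n-1$ coefficients modulo $a_n$, and absorb the entire correction into $x_n$ by one exact division, where the only thing to verify is the congruence guaranteeing $x_n \in \Z$ --- which you do. Your route is substantially shorter, avoids the matching entirely, and yields the stronger bound $|x_i| \le nM$, which implies the stated $(n+1)M^2$. The reduction to nonnegative $a_i$ with $a_n \ge 1$, the treatment of the all-zero case, and the \TC{} implementation of each arithmetic step are all adequately justified. What the paper's approach buys in exchange for its extra work is symmetry (no distinguished index receives the accumulated correction) and a self-contained recipe for shrinking an arbitrary given Bezout representation; your approach buys brevity and a better constant.
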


\begin{proof}
	Let $A=\max\{|a_{1}|,\ldots, |a_{n}|\}$, which clearly can be computed in \TC. \Wlog we assume that all the $a_i$ are positive.  We assume that all numbers which appear as intermediate results are encoded in binary (indeed, these numbers will grow too fast to encode them in unary).
	
	First observe that $\gcd(a_{1},\ldots,a_{i})$ can be computed in \TC for all $i \in \oneset{1, \dots, n}$. The reason is simply that there are only linearly many numbers less than each $a_i$. In fact, for computing $\gcd(a_{1},\ldots,a_{n})$, the circuit just checks for all $d \leq A$ whether for every $i$ there is some $c_i \leq a_i$ with  $d c_i=a_i$. If for some $d$ there are such $c_i$ for all $i$, we have found a common divisor. The $\gcd$ is simply the largest one. 
	
	Thus, it remains to compute the coefficients $x_i$. Since we can compute $\gcd(a_{1},\ldots,a_{n})$ in \TC, we can divide all numbers $a_i$ by the $\gcd$ and henceforth assume that  $\gcd(a_{1},\ldots,a_{n}) = 1$ (note that this does not change the coefficients $x_i$).	
	
	The first step for computing the $x_i$s, is to compute $d_i = \gcd(a_{1},\ldots,a_{i})$ for $i=1, \dots, n$ and $d_0 = 0$ (note that by our assumption, $d_n = 1$).
	We have $$d_i = \gcd(a_{1},\ldots,a_{i}) = \gcd(\gcd(a_{1},\ldots,a_{i-1}), a_i) = \gcd(d_{i-1}, a_i).$$ Using this observation, the next step computes for each $i$ integers $y_i$ and $z_i$ such that
	$d_i = y_i d_{i-1} + z_i a_i$. For all $i$ this can be done in parallel in \TC by simply trying all possible values with $\abs{y_i}, \abs{z_i} \leq A$ as in \prettyref{ex:gcdeasy}. 
	We set $$x_i = z_i \prod_{j=i+1}^{n} y_j.$$ These $x_i$ can be computed in \TC using iterated multiplication \cite{hesse01}~-- see \prettyref{thm:divisionTC}. Moreover, an easy induction shows that	
	$$x_{1} a_{1} + \cdots + x_{n} a_{n} = \gcd(a_{1},\ldots,a_{n}).$$
	There is only one problem with the numbers $x_i$: in general, they do not meet the  bounds $|x_{i}| \leq (n+1) A^2$. So, the next step will be to modify these $x_i$ in such a way that they meet the desired bound. The idea is to apply a sequence of operations as in \prettyref{ex:gcdeasy} to make the coefficients small. The difficulty here is to find out where exactly to add/subtract a multiple of which $a_i$. 
		
	Let $\cP = \set{i \in \oneset{1, \dots, n}}{x_i > 0}$ and $\cN = \set{i \in \oneset{1, \dots, n}}{x_i  <0}$. Note that $\cP \cap \cN = \emptyset$ and \wwlog we can assume that $\cP \cup \cN = \oneset{1, \dots, n}$.
	For all $i= 1, \dots n$, we set
	\newcommand{\p}{p}
	\newcommand{\n}{n}
	\begin{align}
	\p'_i &= \max\left( 0, \floor{\frac{x_i a_i}{A^2}}\right),&
	\n'_i &= \max\left( 0, \floor{\frac{-x_i a_i}{A^2}}\right).\label{eq:p_prime}
	\end{align}
	Obviously, we have $\p'_i = 0 $ for $i \in \cN$ and  $\n'_i = 0 $ for $i \in \cP$. The non-zero $\p'_i$ correspond to those indices which have a too large positive $x_i$ and the non-zero $\n'_i$ to those indices which have a too small negative $x_i$ (this is because we assumed the $a_i$ to be positive). Moreover, $x_i$ should be decreased (resp. increased) by $A^2\p'_i/a_i$ (resp. $A^2\n'_i/a_i$) in order to make it reasonably small. We will not be able to reach this aim completely, but with a sufficiently small error.
	
	Next, we set $P'_i = \sum_{j=1}^{i}\p'_j$ and $N'_i = \sum_{j=1}^{i}\n'_j$. All the $\p'_i$, $\n'_i$, $P'_i$, $N'_i$ and $\cP$ and $\cN$ can be computed in \TC using iterated addition and division~-- see \prettyref{thm:divisionTC}. 
	
	\begin{lemma}\label{lem:pnbound}
		\[P'_n - N'_n \leq \abs{\cN} \text{ and } N'_n - P'_n \leq \abs{\cP}\]
	\end{lemma}
	\begin{proof}
		For $i\in \cP$, we have $0\leq x_i a_i - p'_i A^2 < A^2$ by definition of $p'_i$. Likewise, we have $0\geq x_i a_i + n'_i A^2 > -A^2$ for $i \in \cN$. Since $\cP \cap \cN = \emptyset$ and $\cP \cup \cN = \oneset{1, \dots, n}$, we obtain \begin{align*}	 
		(P'_n - N'_n)A^2 &= \sum_{i \in \cP}p'_iA^2 - \sum_{i\in \cN} n'_i A^2< \sum_{i \in \cP}x_ia_i + \sum_{i\in \cN} (x_ia_i +  A^2)\\
		&= x_{1} a_{1} + \cdots + x_{n} a_{n} +\abs{\cN}A^2 = 1 +\abs{\cN}A^2
		\end{align*}
	 meaning that $P'_n - N'_n \leq \abs{\cN} $. The same argument yields $(P'_n - N'_n) A^2 > 1 - \abs{\cP} A^2$, and thus $N'_n - P'_n < \abs{\cP}$. 
	\end{proof}
	
	Let $D=N'_n - P'_n$. For $i \in \oneset{1, \dots, n}$, 
	we set
	\begin{align}
	p_i &= \begin{cases}
	p'_i  + 1 & \text{if } i \in \cP \text{ and }i \leq D, \\
	p'_i & \text{otherwise},
	\end{cases} &
	n_i &= \begin{cases}
	n'_i  + 1 & \text{if } i \in \cN \text{ and } i \leq -D, \\
	n'_i & \text{otherwise},\label{eq:p_p_prime}
	\end{cases}
	\end{align}
	and $P_i = \sum_{j=1}^{i}\p_j$ and $N_i = \sum_{j=1}^{i}\n_j$ for $i \in \oneset{0, \dots, n}$. Because of \prettyref{lem:pnbound}, we have $N_n = P_n$. Clearly, the $p_i, n_i, P_i, N_i$ can be computed in \TC and from now on we will work with these numbers. Also, as an immediate consequence of \prettyref{eq:p_prime} and \prettyref{eq:p_p_prime}, we have
	\begin{align}\label{eq:p_p}
	\begin{split}
	-A^2\leq x_ia_i -	p_i A^2 \leq A^2& \qquad\qquad\text{for } i \in \cP, \\
	-A^2\leq x_ia_i +	n_i A^2 \leq A^2& \qquad\qquad\text{for } i \in \cN. 
	\end{split}
	\end{align}
	Now, for $i\in \cP$ and $j \in \cN$, we define
	
		\begin{align*}
	\p_{j,i} = \begin{cases}
	\p_i 			& \text{if }  N_{j-1} \leq P_{i-1} < P_{i} \leq N_{j} \\
	N_j - P_{i-1} 	& \text{if }  N_{j-1} \leq P_{i-1} < N_{j} \leq P_{i} \\
	P_{i} - N_{j-1} & \text{if }  P_{i-1} \leq N_{j-1} < P_{i} \leq N_{j} \\
	\n_j		 	& \text{if }  P_{i-1} \leq N_{j-1} < N_{j} \leq P_{i} \\
	0 & \text{otherwise.}
	\end{cases}
	\end{align*}
	Note that the cases overlap. However, then the different definitions of $\p_{j,i}$ agree.
	For $i\in \cN$ and $j \in \cP$, we set $\p_{j,i} = \p_{i,j}$ and for $i,j \in \cP$ or $i,j \in \cN$ we set $\p_{j,i}= 0$.

\begin{lemma}\label{lem:pij}
We have $\displaystyle\sum_j p_{j,i} = p_i $ and $\displaystyle\sum_i p_{j,i} = n_j $.
\end{lemma}

\begin{proof}
We only show $\sum_j p_{j,i} = p_i $; the other statement follows by symmetry.	First, assume that $ p_i = p_{i,j} $ for some $j$. Then $p_{i,j'} = 0$ for all $j' \neq j$; hence, the lemma holds.
Now, let $ p_i \neq p_{i,j} $ for any $j$. We define
	\begin{align*}
	\alpha_i &= \min \set{j \in \oneset{1, \dots, n}}{P_{i-1} < N_j },&
	\beta_i  &= \max \set{j\in \oneset{1, \dots, n}}{N_{j-1} < P_{i}}.
	\end{align*}
In particular, we have $p_{j,i}=0$ for $j < \alpha_i $ or $j > \beta_i$. Notice that $\alpha_i$ and $\beta_i$ exist for all $i \in \cP$ (since $N_n = P_n$). Also $\alpha_i < \beta_i$ because $\alpha_i = \beta_i =j$ implies $N_{j-1} \leq P_{i-1}< N_j$ and $N_{j-1} < P_{i}\leq N_j$; thus, $p_{j,i} = p_i$. 
Moreover, we have $p_{\alpha_i,i} = N_{\alpha_i} - P_{i-1}$ and $p_{\beta_i,i} =  P_i - N_{\beta_i - 1}$ and $p_{j,i} = n_j$ for $\alpha_i < j < \beta_i$. 
	Since \begin{align*}P_i - P_{i-1} = \sum_{j=0}^{i}p_i -  \sum_{j=0}^{i-1}p_i = p_i\qquad\qquad\qquad \text{ and}\end{align*} 
	\begin{align*} N_{\beta_i-1} -  N_{\alpha_i} - \sum_{j=\alpha_i + 1}^{\beta_i - 1}n_j = \sum_{j=1}^{\beta_i - 1}n_j -  \sum_{j=1}^{\alpha_i}n_j - \sum_{j=\alpha_i + 1}^{\beta_i - 1}n_j = 0,\end{align*} we obtain
	\begin{align*}
\sum_j p_{j,i}  &=N_{\alpha_i} - P_{i-1}   + P_i - N_{\beta_i - 1} + \sum_{j=\alpha_i + 1}^{\beta_i - 1}n_j
= p_i.
\end{align*}
\end{proof}

	We set $y_{j,i} = \floor{\frac{p_{j,i} A^2}{a_i a_j}}$ for $i,j =1, \dots, n$. 
	Notice that, since $a_i a_j\leq A^2$, this means that
	\begin{align}
	(p_{j,i} - 1)A^2 < y_{j,i} a_i a_j \leq p_{j,i} A^2.\label{eq:yaa}
	\end{align}
	Finally, we define our new coefficients $\tilde x_i$ as follows:
	\begin{align*}
	\tilde x_i = \begin{cases}
	x_i - \sum_j y_{j,i} a_j & \text{if } i \in \cP,\\
	x_i + \sum_j y_{i,j} a_j & \text{if } i \in \cN,\\
	x_i& \text{otherwise.}
	\end{cases}
	\end{align*}
	It remains to show the following:
	\begin{enumerate}
		\item the numbers $\tilde x_i$ can be computed in \TC,
		\item $\tilde x_{1} a_{1} + \cdots + \tilde x_{n} a_{n} = 1$,
		\item $\abs{\tilde x_i} \leq (n+1)A^2$ for all $i$.
	\end{enumerate} 
	The first point is straightforward: we already remarked that the $\p_i$, $\n_i$, $P_i$, $N_i$ and $\cP$ and $\cN$ can be computed in \TC. Hence, also the 
	$p_{j,i}$ can be computed in \TC (as simple Boolean combination resp.\ addition of the previous numbers). Now, the $y_{j,i}$ can be computed using division \cite{hesse01}. 
	Finally, the computation of the $\tilde{x_i}$ is simply another application of iterated addition.
	
	For the second point observe that 
	\begin{align*}
	\tilde x_{1} a_{1} + \cdots + \tilde x_{n} a_{n} &= \sum_{i \in \cP}  \tilde{x}_i a_i + \sum_{i \in \cN} \tilde{x}_i a_i
	\\
	&= \sum_{i \in \cP} \left(	x_i - \sum_j y_{j,i} a_j\right) a_i  +\sum_{i \in \cN} \left(	x_i + \sum_j y_{i,j} a_j\right) a_i\\
	&= \sum_{i=1}^n x_i a_i - \sum_{i \in \cP} \sum_j y_{j,i} a_j a_i  +\sum_{i \in \cN} \sum_j y_{i,j} a_j a_i 
	\\
	&= \sum_{i=1}^n x_i a_i - \sum_{i \in \cP} \sum_{j\in \cN} y_{j,i} a_j a_i  +\sum_{i \in \cN} \sum_{j\in \cP} y_{i,j} a_j a_i 
	\\
	&= \sum_{i=1}^n x_i a_i
	\end{align*}
	The last equality is due to the fact that $y_{j,i} = y_{i,j}$ for all $i,j$ and that $y_{i,j} = 0$ if $i$ and $j$ are both in  $\cP$ or both in $\cN$.

	For the third point, let $i \in \cP$. Then, 
	\begin{align*}
	\tilde x_i a_i &= x_ia_i - \sum_j y_{j,i} a_j a_i	\geq  x_ia_i -  \sum_j p_{j,i}A^2 \tag{by \prettyref{eq:yaa}}\\
	&= x_ia_i - p_i A^2\tag{by \prettyref{lem:pij}}\\ 
	&\geq -A^2 \vphantom{A^{A^1}}\tag{by \prettyref{eq:p_p}}
	\intertext{and}
	\tilde x_i a_i &= x_ia_i - \sum_j y_{j,i} a_j a_i 
	\leq  x_ia_i -  \sum_j (p_{j,i} - 1)A^2 \tag{by \prettyref{eq:yaa}}\\
	 &=  x_i a_i - A^2p_i + nA^2\tag{by \prettyref{lem:pij}}\\ 
	 &\leq (n+1)A^2\tag{by \prettyref{eq:p_p}}
	\end{align*}
The case $i \in \cN$ is completely symmetric. This concludes the proof of \prettyref{thm:GCD}.
\end{proof}

Notice that it is straightforward to improve the bounds of \prettyref{thm:GCD} further (\eg getting rid of the factor $n+1$). However, since there is no need for that in order to perform the matrix reduction, we do not do this additional effort. Also we could not find a \TC circuit which yields the bound $x_i \leq \frac{1}{2}A$ (which is achievable in \L by \cite{MH94}). 

\section{Matrix reduction and subgroup membership problem }\label{sec:MatrixReduction}\label{se:matrix_reduction}\label{se:presentation}
In \cite{MacDonaldMNV15}, the so-called matrix reduction procedure converts an arbitrary matrix of coordinates into its full form and, thus, is an essential step for solving the subgroup membership problem and several other problems.
It was first described in \cite{Sim94}~-- however, without a precise complexity estimate. In this section, we repeat the presentation from \cite{MacDonaldMNV15} and show that for fixed $c$ and $r$, it can be actually computed  uniformly for groups in \Ncr in \TC~-- in the case that the inputs are given in unary (as words). If the inputs are represented as words with binary exponents, then we still can show that it is \TC-Turing-reducible to \ExtGCD.
In Section~\ref{sec:subgroup}, we defined the matrix representation of subgroups of nilpotent groups. We adopt all notation from Section~\ref{sec:subgroup}.

As before, let $c,r \in \N$ be fixed and let $(a_1, \dots, a_m)$ be the standard Mal'cev basis of $F_{c,r}$. Let $G \in \Ncr$ be given as \qmp, \ie as a matrix in full form (either with unary or binary coefficients).
We define the following operations on tuples $(h_{1},\ldots,h_{n})$ (our subgroup generators) of elements of $G$ and the corresponding operations on the associated matrix, with the goal of converting $(h_{1},\ldots,h_{n})$ to a sequence in full form generating the same subgroup $H=\gen{ h_{1}, \ldots, h_{n}}$:
	\begin{enumerate}[(1)]
		\item Swap $h_{i}$ with $h_{j}$.  This corresponds to swapping row $i$ with row $j$.\label{row:swap}
		\item Replace $h_i$ by $h_ih_j^l$ ($i\neq j,\; l\in\Z$). This corresponds to replacing row $i$ by $\coords{h_ih_j^l}$.\label{row:substraction}
		\item Add or remove a trivial element from the tuple. This corresponds to adding or removing a row of zeros; or (3') a row of the form $(0\;\ldots\; 0\; e_i\; \alpha_{i+1}\;\ldots\; \alpha_m)$, where $i\in \mathcal T$ and $a_i^{-e_i}=a_{i+1}^{\alpha_{i+1}}\cdots a_m^{\alpha_m}$.\label{row:trivial}
		\item Replace $h_i$ with $h_i^{-1}$. This corresponds to replacing row $i$ by $\coords{h_i^{-1}}$. \label{row:inverse}
		\item Append an arbitrary product $h_{i_1}^{l_1}\cdots h_{i_k}^{l_k}$ with $i_1,\dots,i_k \in \oneset{1, \dots, n}$ and $l_1,\dots,l_k \in \Z$ to the tuple: add a new row with $\coords{h_{i_1}^{l_1}\cdots h_{i_k}^{l_k}}$.\label{row:add_linear}
	\end{enumerate}%
Clearly, all these operations preserve $H$. 
\begin{lemma}\label{lem:operationsTC}
	On input of a \qmp of $G \in \cN_{c,r}$ in unary (resp.\ binary) and a matrix of coordinates $A$ given in unary (resp.\ binary), operations (1)--(5) can be done in \TC.  The output matrix will be also encoded in unary (resp.\ binary). For operations (2) and (5), we require that the exponents $l$, $l_1, \dots, l_k$ are given in unary (resp.\ binary).
	
	Moreover, as long as the rows in the matrix which are changed are pairwise distinct, a polynomial number of such steps can be done in parallel in \TC.
\end{lemma}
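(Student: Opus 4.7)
The plan is to reduce each of the five row operations to a bounded number of normal form computations handled by \prettyref{thm:nilpotentExpWP}, together with the basic arithmetic granted by \prettyref{thm:divisionTC}, and then to observe that the parallelism claim is essentially automatic because each operation is realized by a subcircuit writing to a single row.

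First I would dispose of the easy cases. Operation~(1) is pure wiring. Operation~(3) either appends or deletes the zero row, or appends the row $(0,\dots,0,e_i,\alpha_{i+1},\dots,\alpha_m)$, whose entries are read directly out of the \qmp (where each such power relation appears as one of its rows, cf.\ \prettyref{lem:quotient_presentation}). Operation~(4) asks for the Mal'cev normal form of $a_m^{-\alpha_{im}}\cdots a_1^{-\alpha_{i1}}$, which is a word with (binary or unary) exponents of length $m$; a single invocation of \prettyref{thm:nilpotentExpWP} produces it in \TC.

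The substantive cases are~(2) and~(5), and the binary setting is the genuine bottleneck. In the unary regime, both $h_ih_j^l$ and $h_{i_1}^{l_1}\cdots h_{i_k}^{l_k}$ expand directly into words over $a_1^{\pm 1},\dots,a_m^{\pm 1}$ of polynomial length, so \prettyref{thm:nilpotentExpWP} finishes the job. In the binary regime I would avoid the exponential-length expansion of any factor $h_{i_j}^{l_j}$ by invoking \prettyref{lem:malcev}~(ii): there are fixed polynomials $q_1,\dots,q_m\in\Z[x_1,\dots,x_m,z]$ of total degree at most $c$ such that the Mal'cev coordinates of $(a_1^{\alpha_{j1}}\cdots a_m^{\alpha_{jm}})^l$ in $F_{c,r}$ are $q_k(\alpha_{j1},\dots,\alpha_{jm},l)$. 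Since $c$ and $r$ are fixed these polynomials have constant size, and their evaluation on binary input is in \TC by \prettyref{thm:divisionTC}. This converts each $h_{i_j}^{l_j}$ into a word with binary exponents of length $m$; concatenating the factors yields a word of length $O(km)$ for operation~(5) (respectively $2m$ for operation~(2)), to which one final application of \prettyref{thm:nilpotentExpWP} produces the normal form in $G$.

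Finally, each operation reads only from the (unchanged) input matrix and the \qmp and writes only to its single target row. Hence a polynomial number of operations whose target rows are pairwise distinct can be laid out as disjoint subcircuits sharing a common read-only input, and stacking polynomially many \TC subcircuits in parallel remains in \TC. The chief technical point, as flagged above, is operation~(5) in the binary regime: without routing each $h_{i_j}^{l_j}$ through the fixed polynomials from \prettyref{lem:malcev}~(ii), the intermediate representation would blow up exponentially and the final invocation of \prettyref{thm:nilpotentExpWP} would no longer be on a polynomial-length input.
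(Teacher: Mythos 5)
Your proposal is correct and follows essentially the same route as the paper: dispose of (1) and (3) directly, handle (2), (4), (5) in the unary case by expanding to words and invoking Theorem~\ref{thm:nilpotentExpWP}, and in the binary case by first computing $\coords{h_{i_j}^{l_j}}$ via the polynomials $q_i$ of Lemma~\ref{lem:malcev}~(ii) before a final application of Theorem~\ref{thm:nilpotentExpWP}. Your explicit justification of the parallelism claim (disjoint target rows, shared read-only input) is a small addition the paper leaves implicit, but the substance is identical.
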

\begin{proof}
	Operations (1) and (3), clearly can be done in \TC. Notice that operation (3') means simply that a row of the \qmp of $G$ is appended to the matrix.
	
	In the unary case, it follows directly from  Theorem~\ref{thm:nilpotentExpWP} that operations (2), (4), and (5) are in \TC because, since $l$, $l_1, \dots, l_k$ are given in unary, the respective group elements can be written down as words. 
	
	In the case of binary inputs, (5) works as follows ((2) and (4) analogously): by Lemma~\ref{lem:malcev} (ii), there are functions
	$q_{1},\ldots,q_{m}\in\Z[x_1, \dots, x_m,z]$ such that for every $h\in F_{c,r}$ with $\coords{h}=(\gamma_{1},\ldots,\gamma_{m})$ anda $l\in\Z$, we have
	$\coordsj{h^{l}}{i} = q_{i}(\gamma_{1},\ldots,\gamma_{m}, l)$
	in  $F_{c,r}$.
	These functions can be used to compute 
	$\coords{h_{i_j}^{l_j}}$ for $j=1, \dots, k$. After that, $h_{i_1}^{l_1}\cdots h_{i_k}^{l_k}$ can be written down as word with binary exponents and Theorem~\ref{thm:nilpotentExpWP} can be applied.
\end{proof}
Using the row operations defined above, in \cite{MacDonaldMNV15} it is shown how to reduce any coordinate matrix to its unique full form. Let us repeat these steps:

Let $A_{0}$ be a matrix of coordinates, as in (\ref{Eqn:CoordinateMatrix}) in Section~\ref{sec:subgroup}. Recall that $\pi_k$ denotes the column index of the $k$-th pivot (of the full form of $A_0$). We produce matrices $A_{1}, \ldots, A_{s}$, where $s$ is the number of pivots in the full form of $A_{0}$, such that for every $k=1,\ldots, s$ the first $\pi_k$ columns of $A_{k}$ form a matrix satisfying conditions \ref{li:std_echelon}-\ref{li:std_torsion} of being a full sequence, condition \ref{li:std_full} is satisfied for all $i<\pi_{k+1}$, and 
$A_{s}$ is the full form of $A_{0}$.  Here we formally denote $\pivot{s+1}=m+1$. 
Set $\pivot{0}=0$ and assume that $A_{k-1}$ has been constructed for some $k\geq 1$. In the steps below we construct $A_{k}$.
We let $n$ and $m$ denote the number of rows and columns, respectively, of $A_{k-1}$. At all times during the computation, $h_{i}$ denotes 
the group element corresponding to row $i$ of $A_{k}$ and $\alpha_{ij}$ denotes 
the $(i,j)$-entry of $A_{k}$, which is $\coordsj{h_{i}}{j}$.  These may change 
after every operation.
\begin{description}
	\item [\sc Step 1.]
	Locate the column  $\pivot{k}$ of the next pivot, which is the minimum integer $\pivot{k-1}<\pivot{k}\leq m$ such that 
	$\alpha_{i \pivot{k}}\neq 0$ for at least one $k \leq i \leq n$. If no such integer exists, then $k-1=s$ and 
	$A_{s}$ is already constructed.  Otherwise, set $A_{k}$ to be a copy of $A_{k-1}$ and denote $\pivot{}=\pivot{k}$. Compute a linear expression of \[d = {\rm gcd}(\alpha_{k\pivot{}}, \ldots, \alpha_{n\pivot{}})= l_{k}\alpha_{k \pivot{}} + \cdots +  l_{n}\alpha_{n\pivot{}}.\]
	Let $h_{n+1} = h_{k}^{l_{k}} \cdots h_{n}^{l_{n}}$ and note that $h_{n+1}$ has coordinates of the form
	\[\coords{h_{n+1}} = (0,\ldots,0,d, \ldots)\] with $d$ occurring in position $\pivot{}$.
	Perform operation~\ref{row:add_linear} to append $h_{n+1}$ as row $n+1$ of $A_{k}$.
	\item [\sc Step 2.] For each $i=k, \ldots, n$, perform operation~\ref{row:substraction} to replace row $i$ by 
	$\coords{h_i\cdot h_{n+1}^{-\alpha_{i\pivot{}}/d}}.$ and for each $i=1,\ldots,k-1$, use \ref{row:substraction} to replace row $i$ by 
	$\coords{h_{i} \cdot h_{n+1}^{-\lfloor \alpha_{i\pivot{}}/d\rfloor}}$.
	After that, swap row $k$ with row $n+1$ using \ref{row:swap}.  At this point, properties 
	\ref{li:std_echelon}-\ref{li:std_reduced} hold on the first $k$ columns of $A_{k}$.
	\item [\sc Step 3.] If $\pivot{}\in\mathcal{T}$, we additionally ensure condition~\ref{li:std_torsion} as follows. Perform row operation~(3'), with respect to $\pivot{}$, to append a trivial element 
	$h_{n+2}$ with $\coords{h_{n+2}}=(0,\ldots,0,e_{\pivot{}},\ldots)$ to $A_{k}$. 
	Let $\delta=\gcd(d,e_{\pivot{}})$ and compute the linear expression $\delta=n_{1}d+n_{2}e_{\pivot{}}$, with 
	$|n_{1}|,|n_{2}|\leq\max\{d,e_{\pivot{}}\}$. Let 
	$h_{n+3}= h_{k}^{n_{1}} h_{n+2}^{n_{2}}$ and append this row to $A_{k}$, as row $n+3$. Note that 
	$\coords{h_{n+3}}=(0,\ldots,0,\delta,\ldots)$, with $\delta$ in position $\pivot{}$.  
	Replace row $k$ by $\coords{h_{k}\cdot h_{n+3}^{-d/\delta}}$ and row $n+2$ by 
	$\coords{h_{n+2}\cdot h_{n+3}^{-e_{\pivot{}}/\delta}}$, 
	producing zeros in column $\pivot{}$ in these rows. Swap row $k$ with row $n+3$.  
	At this point, 
	\ref{li:std_echelon}, \ref{li:std_positive}, and \ref{li:std_torsion} hold 
	(for the first $\pivot{k}$ columns) but \ref{li:std_reduced} 
	need not, since the pivot entry is now $\delta$ instead of $d$. 
	For each $j=1,\ldots,k-1$, replace row $j$ by $\coords{h_{j}\cdot h_{k}^{-\lfloor \alpha_{j\pivot{}}/\delta\rfloor}}$, 
	ensuring \ref{li:std_reduced}.
	\item [\sc Step 4.] Identify the next pivot $\pivot{k+1}$ (like in Step 1). If 
	$\pivot{k}$ is the last pivot, we set $\pivot{k+1}=m+1$.
	We now ensure condition \ref{li:std_full} for $i< \pi_{k+1}$. 
	Observe that Steps 1-3 preserve $\genr{ h_{j} }{\pivot{j}\geq i}$ for all 
	$i<\pivot{k}$. Hence \ref{li:std_full} holds in $A_{k}$ for $i<\pivot{k}$ since it 
	holds in $A_{k-1}$ for the same range.  Now consider $i$ in the range 
	$\pivot{k}\le i<\pivot{k+1}$. It suffices to establish (vi.i) for all $j>k$ and (vi.ii) for 
	$\pivot{k}$ only.
	To obtain (vi.i), notice that $h_{k}^{-1}h_{j}h_{k},h_{k}h_{j}h_{k}^{-1}\in \genr{ h_\ell }{ \ell>k}$ if, and only if, 
	$[h_{j},h_{k}^{\pm 1}]\in \genr{ h_{\ell}}{ \ell>k}$. 
	Further, note that the subgroup generated by
	\[S_{j}=\{1,h_j, [h_j,h_k],\ldots, [h_j,h_k,\ldots,h_k]\},\]
	where $h_k$ appears $m-\pivot{k}$ times in the last commutator, is closed under commutation with $h_k$ since if $h_{k}$ appears more than $m-\pivot{k}$ times then 
	the commutator is trivial. An inductive argument shows that the subgroup $\gen{ S_{j}}$ coincides with $\langle h_k^{-\ell} h_j h_k^\ell\mid 0\le \ell\le m-\pi_k\rangle$. Similar observations can be made for conjugation by $h_k^{-1}$. Therefore, appending via operation \ref{row:add_linear} rows $\coords{h_k^{-\ell} h_j h_k^\ell}$ for all 
	$1\leq |\ell| \leq m-\pivot{k}$ and all $k< j\le n+3$ delivers (vi.i) for all $j>k$. 
	Note that (vi.i) remains true for $i<\pivot{k}$. 
	
	To obtain (vi.ii), in the case  $\pi_{k}\in\mathcal{T}$, we add row $\coords{h_{k}^{e_{k}/\alpha_{k\pi_{k}}}}$. Note that this element commutes with $h_k$ and therefore (vi.i) is preserved.
	\item [\sc Step 5.] Using operation \ref{row:trivial}, eliminate all zero rows.  The matrix $A_{k}$ is now constructed.\smallskip
\end{description}

We have to show that each step can be performed in \TC  given that all Mal'cev coordinates are encoded in unary (resp.\ in $\TC(\ExtGCD)$ if Mal'cev coordinates are encoded in binary). Since the total number of steps is constant (only depending on the nilpotency class and number of generators), this gives a \TC (resp.\ $\TC(\ExtGCD)$) circuit for computing the full form of a given subgroup.
\begin{description}
	\item [\sc Step 1.] The next pivot can be found in \TC since it is simply the next column in the matrix with a non-zero entry, which can be found as a simple Boolean combination of test whether the entries are zero. In the unary case, by \prettyref{thm:GCD}, $d = {\rm gcd}(\alpha_{k\pivot{}}, \ldots, \alpha_{n\pivot{}})$ can computed in \TC together with $l_k, \dots,  l_n$ encoded in unary such that
	$d = l_{k}\alpha_{k \pivot{}} + \cdots +  l_{n}\alpha_{n\pivot{}}$. Now, by Lemma~\ref{lem:operationsTC}, Step 1 can be done in \TC.
	
	In the binary case, $d$ and $l_k, \dots,  l_n$ can be computed using \ExtGCD. Hence, by Lemma~\ref{lem:operationsTC}, Step 1 can be done in $\TC(\ExtGCD)$.
	\item [\sc Step 2.] The numbers $\lfloor \alpha_{i\pivot{}}/d\rfloor$ (either in unary or binary) can be computed in \TC for all $i$ in parallel by Theorem~\ref{thm:divisionTC}. After that one operation (2) is applied to each row of the matrix. By Lemma~\ref{lem:operationsTC}, this can be done in parallel for all rows in \TC. Finally, swapping rows $k$ and $n+1$ can be done in \TC.
	\item [\sc Step 3.] As explained in Section~\ref{sec:WP},
	$\cT$ and $e_i $ for $i\in \cT$ can be read directly from the \qmp. Thus, it can be decided in \TC whether Step 3 has to be executed. 
	Appending a new row is in \TC. Computing $\gcd(d, e_\pi) = d = n_1 d  n_2 e_\pi$ is in \TC by \prettyref{ex:gcdeasy} (in the unary case) and in $\TC(\ExtGCD)$ in the binary case. After that one operation (5) is followed by two operations (2), one operation (1), and, finally, $k-1$ times operation (2), which all can be done in \TC again by Lemma~\ref{lem:operationsTC}.
	\item [\sc Step 4.] The next pivot can be found in \TC as outlined in Step 1.
	After that, Step 4 consists of an application of a constant number (only depending on the nilpotency class and number of generators) of operations (5) and thus, by Lemma~\ref{lem:operationsTC}, is in \TC.	
	\item [\sc Step 5.] Clearly that is in \TC.
\end{description}
Thus, we have completed the proof of our main result:

\begin{theorem}
	\label{thm:Compute_std_form}
	Let $c,r \in \N$ be fixed. The following problem is in \TC: given a unary encoded \qmp of $G \in \cN_{c,r}$
	and $h_{1},\ldots,h_{n}\in G$, compute the full form 
	of the associated matrix of coordinates encoded in unary and hence the unique full-form sequence $(g_{1},\ldots,g_{s})$ generating $\langle h_{1},\ldots,h_{n}\rangle$. 
	Moreover, if the $G$ and $h_{1},\ldots,h_{n}$ are given in binary, then  the full-form sequence with binary coefficients can be computed in $\TC(\ExtGCD)$.
\end{theorem}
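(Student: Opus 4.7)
The plan is to iterate the matrix-reduction procedure laid out immediately before the theorem, producing matrices $A_0, A_1, \dots, A_s$ where $A_s$ is the desired full form. Since the number of pivots $s$ is at most $m$, and $m$ depends only on the fixed constants $c$ and $r$, only a constant number of stages are needed. Because the composition of a constant number of \TC (resp.\ $\TC(\ExtGCD)$) circuits remains in \TC (resp.\ $\TC(\ExtGCD)$), it suffices to verify that each of Steps 1--5 in a single stage can be carried out by such a circuit, and that the output size stays polynomial.

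For the individual steps I would argue as follows. Locating the next pivot is a Boolean combination of zero-tests on matrix entries, hence clearly in \TC. The non-trivial ingredient of Step 1 is producing the linear expression $d = \gcd(\alpha_{k\pivot{}}, \dots, \alpha_{n\pivot{}}) = l_k\alpha_{k\pivot{}} + \cdots + l_n\alpha_{n\pivot{}}$: in the unary case this is exactly Theorem~\ref{thm:GCD}, while in the binary case it is precisely what the oracle \ExtGCD supplies. Once the coefficients are available, each row operation in Steps 1--4 amounts to appending or modifying one row by a product $h_{i_1}^{l_1}\cdots h_{i_k}^{l_k}$, whose Mal'cev coordinates can be computed in \TC by Lemma~\ref{lem:operationsTC} (which rests on Theorem~\ref{thm:nilpotentExpWP}). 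The integer divisions $\lfloor \alpha_{i\pivot{}}/d\rfloor$ and the mod-reductions in Step 3 are in \TC by Theorem~\ref{thm:divisionTC}; the torsion set $\cT$ and the orders $e_i$ can be read directly off the \qmp. Within a stage, the rows modified simultaneously are pairwise distinct, so Lemma~\ref{lem:operationsTC} allows their parallel execution. The auxiliary gcd in Step 3 is of only two numbers and so handled by Example~\ref{ex:gcdeasy} in the unary case or \ExtGCD in the binary case.

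The main obstacle I anticipate is controlling the sizes of intermediate quantities. In the unary setting, the coefficients returned by the gcd subroutine are polynomially bounded by Theorem~\ref{thm:GCD}, and the fixed Mal'cev polynomials $p_i, q_i$ of Lemma~\ref{lem:malcev} applied to polynomially bounded inputs yield polynomially bounded outputs; since only constantly many stages are performed, every intermediate entry, and hence the final output, admits a unary encoding of polynomial size. In the binary setting, each stage increases the bit-length only polynomially (the $p_i, q_i$ are fixed polynomials and \ExtGCD returns polynomial-length coefficients), so after constantly many stages the bit-length is still polynomial. Stringing the $s+1$ stages together therefore produces a \TC (resp.\ $\TC(\ExtGCD)$) circuit that outputs the unique full-form sequence guaranteed by Lemma~\ref{lem:UniqueStandardForm}.
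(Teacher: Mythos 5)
Your proposal is correct and follows essentially the same route as the paper: both iterate the five-step matrix reduction for the constantly many pivots, using Theorem~\ref{thm:GCD} (resp.\ the \ExtGCD oracle) for the linear gcd expression, Lemma~\ref{lem:operationsTC} for the row operations, and Theorem~\ref{thm:divisionTC} for the divisions. Your explicit discussion of intermediate size bounds is slightly more detailed than the paper, which delegates this to the general observation that composing constantly many \TC-computable functions automatically keeps outputs polynomial.
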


\subsection{Subgroup membership problem}\label{se:membership_problem}
\label{se:presentation}

We  can  now  apply  the  matrix  reduction  algorithm  to  solve  the subgroup membership problem  in \TC.

\begin{theorem}\label{thm:logspace_membership}
	Let $c,r \in \N$ be fixed. The following problem is in \TC (resp.\ $\TC(\ExtGCD)$ for binary inputs): given a \qmp of $G \in \cN_{c,r}$, elements $h_{1},\ldots,h_{n}\in G$ and $h\in G$, decide whether or not $h$ is an element of the subgroup $H=\gen{ h_{1},\ldots, h_{n}}$. 

Moreover, if $h\in H$, the circuit computes the unique expression $h=g_{1}^{\gamma_{1}}\cdots g_{s}^{\gamma_{s}}$ where $(g_{1},\ldots,g_{s})$ is the full-form sequence for $H$ 
with the $\gamma_i$ encoded in unary (resp.\ binary).

Alternatively, for unary inputs, the output can be given as word $h=h_{i_1}^{\epsilon_{1}}\cdots h_{i_t}^{\epsilon_{t}}$ where $i_j \in \{1, \ldots, n\}$ and $\epsilon_j = \pm 1$.
\end{theorem}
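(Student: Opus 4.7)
The plan is to first apply Theorem~\ref{thm:Compute_std_form} to obtain the full-form sequence $(g_1,\ldots,g_s)$ generating $H$ together with its associated matrix of coordinates $(\alpha_{ij})_{i,j}$, and then to decide membership by a ``greedy'' column-by-column reduction using the pivots. Recall from Lemma~\ref{lem:UniqueStandardForm} that every element of $H$ has a \emph{unique} expression $g_1^{\gamma_1}\cdots g_s^{\gamma_s}$ with $0 \leq \gamma_i < e_{\pivot{i}}/\alpha_{i\pivot{i}}$ when $\pivot{i}\in\mathcal T$. Consequently $h\in H$ if, and only if, such an expression exists, and uniqueness will be exploited to compute the $\gamma_i$ step by step.

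First I would compute $\coords{h} = (\eta_1,\ldots,\eta_m)$ via Theorem~\ref{thm:nilpotentExpWP}. Then I iterate for $i=1,\ldots,s$: at round~$i$ I have a current element $h_{(i-1)}$ whose first $\pivot{i}-1$ coordinates (in any order produced consistently with the algorithm) have been zeroed out. Let $\beta = \coordsj{h_{(i-1)}}{\pivot{i}}$. If the full-form pivot entry $\alpha_{i\pivot{i}}$ does not divide $\beta$, or if $\pivot{i}\in\mathcal T$ and reducing modulo $e_{\pivot{i}}$ still leaves a non-divisible remainder, then no valid $\gamma_i$ exists and we reject. Otherwise set $\gamma_i = \beta/\alpha_{i\pivot{i}}$ (reduced modulo $e_{\pivot{i}}/\alpha_{i\pivot{i}}$ when $\pivot{i}\in\mathcal T$), and replace $h_{(i-1)}$ by $h_{(i)} = g_i^{-\gamma_i} h_{(i-1)}$; because $g_i$ has its first $\pivot{i}-1$ coordinates zero and the pivot entry exactly $\alpha_{i\pivot{i}}$, this zeros out column $\pivot{i}$ and fits into the remainder of the reduction. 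After the loop, we accept if, and only if, $h_{(s)} = 1$, which by the uniqueness part of Lemma~\ref{lem:UniqueStandardForm} is equivalent to $h\in H$ and yields the desired expression.

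The crucial point is the complexity bound. Since $c$ and $r$ are fixed, $m$, and hence $s\leq m$, are constants, so the loop has constant depth. Each round consists of: recomputing the Mal'cev coordinates of $h_{(i-1)}$ by Theorem~\ref{thm:nilpotentExpWP}, one integer division to obtain $\gamma_i$ by Theorem~\ref{thm:divisionTC}, and one update that reduces to a word-problem computation in $G$ again via Theorem~\ref{thm:nilpotentExpWP}. In the unary case this is therefore a composition of constantly many \TC circuits, hence still in~\TC, and all coordinates (including $\gamma_i$) remain polynomially bounded. In the binary case, the only step that escapes \TC is the computation of $(g_1,\ldots,g_s)$ and the binary divisions appearing inside Theorem~\ref{thm:nilpotentExpWP} on long exponents, which are available respectively in $\TC(\ExtGCD)$ (by Theorem~\ref{thm:Compute_std_form}) and in \TC (by Theorem~\ref{thm:divisionTC}); the other per-round operations are unchanged.

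The only slightly subtle issue, and what I expect to be the main obstacle, is producing the alternative output in the unary case as a word $h = h_{i_1}^{\epsilon_1}\cdots h_{i_t}^{\epsilon_t}$ over the \emph{original} generators with $\epsilon_j = \pm 1$. For this, during the matrix reduction of Theorem~\ref{thm:Compute_std_form} I would, in parallel with each row operation~(\ref{row:swap})--(\ref{row:add_linear}), maintain for every current row a formal word over $h_1,\ldots,h_n$ representing that row's group element; operations (1) and (3)/(3') are cosmetic, while (2), (4), (5) concatenate inverses and powers whose exponents are themselves unary-bounded (either by the input size or by the coefficients produced by Theorem~\ref{thm:GCD}). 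Since the matrix reduction uses only a constant number of ``rounds'' (depending on $m$), the resulting words for $g_1,\ldots,g_s$ have polynomial length. Concatenating $\gamma_i$ unary-many copies of the word for $g_i$ (or its inverse) for $i=1,\ldots,s$ then yields a polynomial-length word of the required form, all in \TC.
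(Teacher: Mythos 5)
Your proposal is correct and follows essentially the same route as the paper: compute the full-form sequence via Theorem~\ref{thm:Compute_std_form}, then peel off the unique exponents $\gamma_i$ pivot by pivot using divisibility checks and $h^{(j+1)}=g_j^{-\gamma_j}h^{(j)}$, with a constant number of rounds since $s\leq m$ is fixed, and obtain the word over $h_1,\ldots,h_n$ by tracking words through the matrix reduction. The minor variations (explicitly reducing $\gamma_i$ modulo $e_{\pivot{i}}/\alpha_{i\pivot{i}}$, relying on the final check $h_{(s)}=1$ rather than early rejection when a pre-pivot coordinate is nonzero) do not change the argument.
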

Note that we do not know whether there is an analog of the second type of output for binary inputs. A possible way of expressing the output would be as a word with binary exponents over $h_{1},\ldots,h_{n}$. However, simply applying the same procedure as for unary inputs will not lead to a word with binary exponents.

\begin{proof}
	The circuit works as follows: first, the
	the full form $A$ of the coordinate matrix corresponding to 
	$H$ and the standard-form sequence $(g_1, \ldots, g_s)$ are computed in \TC (resp.\ $\TC(\ExtGCD)$) using \prettyref{thm:Compute_std_form}. 
	As before, denote by $\alpha_{ij}$ the $(i,j)$-entry of $A$ and by 
	$\pivot{1}, \ldots, \pivot{s}$ its pivots.
	
	By Lemma~\ref{lem:UniqueStandardForm}, any element of $H$ can be written as $g_1^{\gamma_1} \cdots g_s^{\gamma_s}$. We show how to find these exponents. Denote $h^{(1)} = h$ and $\coords{h^{(j)}}=(\beta_1^{(j)}, \ldots, \beta_m^{(j)})$, with $h^{(j)}$ being defined below. For $j=1, \ldots, s$, do the following. If $\beta_{l}^{(j)} \neq 0$ for 
	any  $1\leq l < \pi_j$, then $h\notin H$. Otherwise, 
	check whether $\alpha_{j{\pi_j}}$ divides $\beta_{\pi_{j}}^{(j)}$. If not, then $h\notin H$. If yes, let 
	\[
	\gamma_{j} = \beta_{\pi_{j}}^{(j)}/{\alpha_{j\pi_j}} \quad \text{and} \quad h^{(j+1)} = g_j^{-\gamma_j}h^{(j)}. 
	\]
	If $j<s$, continue to $j+1$.  If $j=s$, then $h=g_{1}^{\gamma_{1}}\cdots g_{s}^{\gamma_{s}}\in H$ if $h^{(s+1)}=1$ and 
	$h\notin H$ otherwise.
	
	Since $s$ is bounded by a constant, there are only a constant number of steps. Each step can be done in \TC by \prettyref{thm:divisionTC} (division) and \prettyref{thm:nilpotentExpWP} (computation of Mal'cev coordinates).
	
	For the second type of output in the unary case, while performing the matrix reduction, we store for every row of the matrix also how that row can be expressed as a word over the subgroup generators $h_1, \dots, h_n$ (here, we need the unary inputs, as otherwise the group elements cannot be expressed as words in polynomial space). In every operation on the matrix these words are updated correspondingly, which clearly can be done in \TC.  
	In the end after writing $h=g_{1}^{\gamma_{1}}\cdots g_{s}^{\gamma_{s}}$, every $g_i$ can be substituted by the respective word. 
\end{proof}

Since abelian groups are nilpotent, we obtain:

\begin{corollary}[]\label{cor:smpAb}
Let $r$ be fixed. The following problem is in \TC:
Given a list $h_1, \dots, h_n \in \Z^r$ and $g \in \Z^r$ (all as words over the generators), decide whether $g \in \gen{h_1, \dots, h_n}$. Moreover, in the case of a positive answer, compute $x_1, \dots, x_n \in \Z$ in unary such that $g = x_1h_1 + \dots + x_nh_n$.

In other words: for fixed $r$, given a unary encoded system of linear equations $(A,b)$ with $A \in \Z^{r \times n}$ and $b \in \Z^r$, a unary encoded solution $x \in \Z^n$ with $Ax=b$ can be computed in \TC. 
\end{corollary}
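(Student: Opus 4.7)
My plan is to derive this as a direct corollary of Theorem~\ref{thm:logspace_membership} applied to the group $\Z^r$. The first step is to observe that $\Z^r$ coincides with the free nilpotent group $F_{1,r}$ of class $1$ and rank $r$, so $\Z^r \in \mathcal{N}_{1,r}$; moreover, its \qmp with respect to the standard Mal'cev basis $(a_1, \dots, a_r)$ is the trivial one (an empty matrix, which is vacuously in full form). Since $r$ is fixed, the circuit can hardwire this \qmp. I would then translate each input vector $h_i = (h_{i1}, \dots, h_{ir})$ with unary entries to the word $a_1^{h_{i1}} \cdots a_r^{h_{ir}}$ of polynomial length, using inverse generators to represent negative coordinates, and likewise for $g$. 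This translation is clearly in \TC.

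Next, I would invoke the alternative unary output of Theorem~\ref{thm:logspace_membership} to obtain, in \TC, both a decision of whether $g \in \gen{h_1, \dots, h_n}$ and, in the positive case, a word $g = h_{i_1}^{\epsilon_1} \cdots h_{i_t}^{\epsilon_t}$ of polynomial length $t$ with $\epsilon_k \in \oneset{\pm 1}$ and $i_k \in \oneset{1, \dots, n}$. Because $\Z^r$ is abelian, this expression collapses to the linear combination $g = \sum_{j=1}^{n} x_j h_j$ with $x_j = \sum_{k :\, i_k = j} \epsilon_k$. Each $x_j$ is an iterated sum of polynomially many values $\pm 1$, hence computable in \TC by Theorem~\ref{thm:additionTC}; moreover $\abs{x_j} \leq t$ is bounded by a polynomial in the input length, so the $x_j$ admit a unary encoding.

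The reformulation as linear systems is then immediate: setting $A = (h_1 \mid \cdots \mid h_n) \in \Z^{r \times n}$ and $b = g$, the system $Ax = b$ is solvable over $\Z$ if, and only if, $b \in \gen{h_1, \dots, h_n}$, and the vector $x = (x_1, \dots, x_n)$ produced above is a solution. I do not foresee any real obstacle here; all the difficult work (the unary extended gcd in Theorem~\ref{thm:GCD} and the matrix reduction underlying Theorem~\ref{thm:logspace_membership}) is already in place, and the only new observation needed is that in the abelian setting the word output of Theorem~\ref{thm:logspace_membership} can be compressed into a tuple of unary integer coefficients by counting occurrences of each generator with sign.
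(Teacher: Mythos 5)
Your proposal is correct and follows exactly the route the paper intends: the corollary is stated as an immediate consequence of Theorem~\ref{thm:logspace_membership} applied to the abelian (hence nilpotent) group $\Z^r$, and your additional observation that the word output $h_{i_1}^{\epsilon_1}\cdots h_{i_t}^{\epsilon_t}$ collapses in the abelian case to unary coefficients $x_j=\sum_{k:\,i_k=j}\epsilon_k$ computable by iterated addition is precisely the (elided) detail needed to obtain the stated output format. No gaps.
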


\subsection{Subgroup presentations}\label{se:presentation}

The full-form sequence associated to a subgroup $H$ forms a Mal'cev basis for $H$. This allows us to compute a consistent
nilpotent presentation for $H$. Note, however, that the resulting presentation is \emph{not} a \qmp (although it can be transformed into one, see \prettyref{prop:find_presentation_poly})~-- partly this is due to the fact that, in general, $H\notin \Ncr$.
The following is the extended version of \cite[Thm.\ 3.11]{MacDonaldMNV15}:
\begin{theorem}\label{thm:EffectiveCoherence}
Let $c,r \in \N$ be fixed. The following is in \TC for unary inputs and in $\TC(\ExtGCD)$ for binary inputs:

\emph{Input:} a \qmp for $G \in \cN_{c,r}$ and elements $h_{1},\ldots,h_{n}\allowbreak\in G$.

\emph{Output:} a consistent nilpotent presentation for $H = \gen{ h_{1},\ldots,h_{n}}$ given by a list of generators $(g_{1},\ldots,g_{s})$ and numbers  $\mu_{ij},\alpha_{ijk},\beta_{ijk}\in\Z$ encoded in unary (resp.\ binary) for $1 \leq i < j < k \leq s$ representing the relations (\ref{stdpolycyclic1})-(\ref{stdpolycyclic3}).
\end{theorem}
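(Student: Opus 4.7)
The plan is to invoke \prettyref{thm:Compute_std_form} to compute the full-form generating sequence $(g_1,\ldots,g_s)$ of $H$, and then to extract each relation coefficient as the image of an explicit element of $H$ under \prettyref{thm:logspace_membership}. By \prettyref{lem:UniqueStandardForm} and the construction in Section~\ref{se:matrix_reduction}, the sequence $(g_1,\ldots,g_s)$ is a Mal'cev basis of $H$ adapted to the central series $H\cap\gen{a_k,\ldots,a_m}$ inherited from $G$. Consequently, as in the proof of \prettyref{lem:quotient_presentation}, any relations of the form (\ref{stdpolycyclic1})--(\ref{stdpolycyclic3}) that actually hold in $H$ with respect to $(g_1,\ldots,g_s)$ will automatically yield a \emph{consistent} nilpotent presentation. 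It therefore suffices to compute the coefficients.

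For the commutator relations I would, for each pair $1\le i<j\le s$, concatenate the Mal'cev normal forms of $g_j$ and $g_i$ (respectively of $g_j^{-1}$ and $g_i$) into a single word with binary (or unary) exponents in the $a_k$'s, reduce it to Mal'cev coordinates in $G$ via \prettyref{thm:nilpotentExpWP}, and then apply \prettyref{thm:logspace_membership} to the resulting element to obtain its unique expression $g_1^{\gamma_1}\cdots g_s^{\gamma_s}$. Writing $g_jg_i=g_ig_j\cdot[g_j,g_i]$ and observing that $[g_j,g_i]\in H\cap G_{n_{\pivot{i}}+n_{\pivot{j}}}$ lies in the subgroup generated by those $g_\ell$ whose pivot corresponds to a Mal'cev generator of $G$ of weight strictly greater than $n_{\pivot{j}}$, uniqueness of the normal form in $H$ forces $\gamma_k=0$ for $k\le j$ with $k\notin\{i,j\}$ and $\gamma_i=\gamma_j=1$; the tail $\gamma_{j+1},\ldots,\gamma_s$ then supplies the coefficients $\alpha_{ijk}$ of (\ref{stdpolycyclic2}), and the analogous calculation on $g_j^{-1}g_i$ yields the $\beta_{ijk}$ of (\ref{stdpolycyclic3}).

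For the power relations I would first identify the torsion indices of $H$: the row of $g_i$ has $\pivot{i}\in\mathcal{T}$ exactly when the pivot column corresponds to a torsion coordinate of $G$, a condition read directly from the \qmp of $G$ together with the full form of $H$; by condition~\ref{li:std_torsion} the integer $e^H_i := e_{\pivot{i}}/\alpha_{i\pivot{i}}$ is positive and is produced by a single integer division (\prettyref{thm:divisionTC}). The Mal'cev coordinates of $g_i^{e^H_i}$ in $G$ are obtained by evaluating the fixed polynomials of \prettyref{lem:malcev}(ii), which amounts to iterated addition and multiplication and is in \TC by \prettyref{thm:divisionTC}. Feeding these coordinates into \prettyref{thm:logspace_membership} returns $g_i^{e^H_i}=g_1^{\gamma_1}\cdots g_s^{\gamma_s}$; by the same uniqueness-and-weight argument $\gamma_k=0$ for $k\le i$, and the remaining $\gamma_k$ are precisely the $\mu_{ik}$ of (\ref{stdpolycyclic1}).

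Since $s\le m$ is a constant depending only on $c$ and $r$, there are only $O(1)$ commutator pairs and $O(1)$ power relations to produce, and all invocations of \prettyref{thm:Compute_std_form}, \prettyref{thm:nilpotentExpWP}, \prettyref{thm:logspace_membership}, \prettyref{lem:malcev}, and \prettyref{thm:divisionTC} can be carried out in parallel. This places the entire algorithm in \TC in the unary case and in $\TC(\ExtGCD)$ in the binary case. The main delicate point is not the circuit bookkeeping but the correctness of the syntactic shape of the output relations — namely, verifying that the normal forms of $g_jg_i$, $g_j^{-1}g_i$, and $g_i^{e^H_i}$ really have $\gamma_k=0$ in all positions forbidden by (\ref{stdpolycyclic1})--(\ref{stdpolycyclic3}) — and this follows from uniqueness of Mal'cev normal forms together with the fact that $(g_1,\ldots,g_s)$ is adapted to the central series $H\cap\gen{a_k,\ldots,a_m}$ and that commutators in a nilpotent group strictly increase weight.
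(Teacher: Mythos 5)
Your proposal is correct and follows essentially the same route as the paper: compute the full-form sequence via \prettyref{thm:Compute_std_form}, observe that it is a Mal'cev basis of $H$ (so the resulting presentation is automatically consistent), and extract the coefficients of (\ref{stdpolycyclic1})--(\ref{stdpolycyclic3}) by feeding $g_i^{e_{\pivot{i}}/\alpha_{i\pivot{i}}}$ and the commutator-type products into the membership circuit of \prettyref{thm:logspace_membership}. The only (immaterial) difference is that you run membership in $H$ itself and argue via uniqueness of normal forms that the leading exponents are forced, whereas the paper runs it directly against the tail subgroups $H_{i+1}=\gen{g_{i+1},\ldots,g_s}$.
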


\begin{proof}
	First, the full sequence $(g_{1},\ldots,g_{s})$ 
	for $H$ is computed in \TC (resp.\ $\TC(\ExtGCD)$) according to \prettyref{thm:Compute_std_form}. Let 
	$H_{i}=\langle g_{i},g_{i+1},\ldots,g_{s}\rangle$. In the proof of \cite[Thm.\ 3.11]{MacDonaldMNV15}, it is shown that
	$(g_{1},\ldots,g_{s})$ is a Mal'cev basis 
	for $H$. Hence, it remains to compute the relators 
	(\ref{stdpolycyclic1})-(\ref{stdpolycyclic3}) in order to give a consistent 
	nilpotent presentation of $H$. 
	The order $e_{i}'$ of $g_{i}$ modulo $H_{i+1}$ is simply 
	$e_{i}/\coordsj{g_{i}}{\pivot{i}}$ (as before $\cT$ and $e_i $ for $i\in \cT$ can be read from the \qmp). 
	Each relation (\ref{stdpolycyclic1}) can be computed using the \TC (resp.\ $\TC(\ExtGCD)$) circuit of \prettyref{thm:logspace_membership} 
	with input $g_{i}^{e_{i}'}$ and $H_{i+1}=\langle g_{i+1},\ldots, g_{s}\rangle$.  
	Since $g_{i}^{e_{i}'}\in H_{i+1}$ and $(g_{i+1},\ldots,g_{s})$ is the unique full sequence for $H_{i+1}$, the membership algorithm returns the expression 
	on the right side of (\ref{stdpolycyclic1}). Relations (\ref{stdpolycyclic2}) and 
	(\ref{stdpolycyclic3}) are established using the same method. Note that there are only a constant number of relations to establish~-- so everything can be done in \TC (resp.\ $\TC(\ExtGCD)$).
\end{proof}

\section{More algorithmic problems}\label{sec:more}
\subsection{Homorphisms and kernels}

Given nilpotent groups $G$ and $H$ and a subgroup $K\leq G$ and a generating set $g_{1},\ldots,g_{n}$ of $K$, a homomorphism 
$\phi:K \to H$ can be specified by a list of elements $h_{1},\ldots,h_{n}$ where $\phi(g_{i})=h_{i}$ for 
$i=1,\ldots,n$. For a homomorphism, we consider the problem of finding a generating set for its kernel, and given $h\in \phi(K)$ finding $g\in G$ 
such that $\phi(g)=h$. Following \cite{MacDonaldMNV15}, both problems are solved using matrix reduction 
in the group $H\times G$.

\begin{theorem}[Kernels and preimages]\label{thm:KernelAndPreimage}
Let $c,r \in \N$ be fixed. The following is in \TC for unary inputs and in $\TC(\ExtGCD)$ for binary inputs: On input of
	\begin{itemize}
	    \item $G, H \in \Ncr$ given as \qmps,
		\item a subgroup $K=\langle g_{1},\ldots,g_{n}\rangle\leq G$,
		\item a list of elements $h_{1},\ldots,h_{n}$ defining a homomorphism $\phi:K\rightarrow H$ via $\phi(g_{i})=h_{i}$, and 
		\item optionally, an element $h\in H$ guaranteed to be in the image of $\phi$, 
	\end{itemize}
	compute 
	\begin{enumerate}
		\item a generating set $X$ for the kernel of $\phi$, and \label{kernel}
		\item an element $g\in G$ such that $\phi(g)=h$.\label{preimage}
	\end{enumerate}
	 In case of unary inputs, $X$ and $g$ will be returned as words, and for binary inputs, as words with binary exponents.
\end{theorem}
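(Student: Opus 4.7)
The strategy, following \cite{MacDonaldMNV15}, is to work inside the direct product $H\times G$ and study the subgroup
\[L=\langle(h_1,g_1),\ldots,(h_n,g_n)\rangle\leq H\times G,\]
which is precisely the graph of $\phi$. Hence $L\cap(\{1\}\times G)=\{1\}\times\ker\phi$, and any element $(h,g)\in L$ certifies $\phi(g)=h$. The plan is to compute a full-form sequence of $L$ and then extract both a generating set for $\ker\phi$ and a preimage of $h$ from it.

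First, construct a \qmp for $H\times G\in\mathcal{N}_{c,2r}$. Express $H\times G$ as a quotient of $F_{c,2r}$ by identifying the first $r$ free generators with those of the $H$-factor and the last $r$ with those of $G$. The kernel of this surjection is generated by the rows of the given \qmps of $H$ and $G$ (embedded into the respective copies of $F_{c,r}\subset F_{c,2r}$) together with the commutators between the two sets of free generators that enforce commutation of the factors. All these generators are listable in \TC from the input; Theorem~\ref{thm:Compute_std_form} then converts them to full form, yielding the \qmp. Using Theorem~\ref{thm:nilpotentExpWP} to compute the Mal'cev coordinates of each $(h_i,g_i)$ in $H\times G$ and applying Theorem~\ref{thm:Compute_std_form} a second time produces the full-form sequence $(\ell_1,\ldots,\ell_s)$ of $L$.

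For the kernel, partition the full-form generators by whether their image under the projection $\mathrm{pr}_H:H\times G\to H$ is trivial. Those $\ell_i$ with $\mathrm{pr}_H(\ell_i)=1$ lie in $\{1\}\times\ker\phi$, and by the full-form property~(vi) they generate precisely $L\cap(\{1\}\times G)$. Their $G$-components, extracted via Theorem~\ref{thm:nilpotentExpWP}, form the required generating set $X$ for $\ker\phi$.

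For the preimage, the projections $\mathrm{pr}_H(\ell_i)$ of the remaining generators form a full-form sequence for $\mathrm{Im}(\phi)$ in $H$ with respect to the \qmp of $H$: the row-echelon, positivity, reducedness, and torsion conditions descend directly to the projections, and property~(vi) holds because any element of $\mathrm{Im}(\phi)\cap\langle a_i^{(H)},\ldots\rangle$ lifts to some $\ell\in L$ whose $H$-coordinates vanish in positions $<i$, so $\ell\in\langle\ell_j:\pi_j\geq i\rangle$ by the full-form property of $L$. Theorem~\ref{thm:logspace_membership} applied in $H$ thus writes $h=\mathrm{pr}_H(\ell_1)^{\beta_1}\cdots\mathrm{pr}_H(\ell_t)^{\beta_t}$, whereupon $\ell_1^{\beta_1}\cdots\ell_t^{\beta_t}\in L$ has $H$-component $h$; its $G$-component $g$, recovered via Theorem~\ref{thm:nilpotentExpWP}, satisfies $\phi(g)=h$. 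The principal delicacy is setting up the \qmp of $H\times G$ and verifying that projection to $H$ preserves the full-form structure of $L$; beyond that, every step is a direct \TC (resp.\ $\TC(\ExtGCD)$) invocation of an earlier result.
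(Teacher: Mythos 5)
Your proposal is correct and follows essentially the same route as the paper's proof: both work with the graph subgroup $\langle (h_i,g_i)\rangle$ inside $H\times G\in\mathcal{N}_{c,2r}$, compute its full form via Theorem~\ref{thm:Compute_std_form}, read off the kernel from the generators with trivial $H$-component, and obtain the preimage by expressing $h$ over the $H$-projections of the remaining generators via Theorem~\ref{thm:logspace_membership} and lifting. The only difference is that you supply the verification that the $H$-projections form a full-form sequence for $\mathrm{Im}(\phi)$, which the paper delegates to a citation of \cite[Thm.\ 4.1]{MacDonaldMNV15}.
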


\begin{proof}
	Let $(a_1, \dots, a_m)$ be the standard Mal'cev basis of $F_{c,r}$ and $(b_1, \dots, b_{m'})$ the standard Mal'cev basis of $F_{c,2r}$
	We have two embeddings of $\phi_H, \phi_G : F_{c,r} \to F_{c,2r}$ with $\phi_H(a_i) = b_i$  and $\phi_G(a_i) = b_{r + i}$ for $i = i, \dots, r$. We can assume that the Mal'cev basis of $F_{c,2r}$ is chosen in such a way that these embeddings send all Mal'cev generators of $F_{c,r}$ to Mal'cev generators of $F_{c,2r}$. Note that we have $\phi_H(F_{c,r}) \cap \phi_G(F_{c,r}) = \oneset{1}$.
	
	Thus, we can read all relators of $H$ and $G$ in $F_{c,2r}$ via the embeddings $\phi_H$ and $\phi_G$, respectively. To obtain a \qmp of $H \times G$, we simply need to add the relations that $H$ and $G$ commute~-- that is we need to introduce additional relations $b_i = 1$ for all Mal'cev generators which are not in the image of  $\phi_G$ or $\phi_H$. As the new \qmp is basically a copy of those of $H$ and $G$, it can be computed in \TC. 
	From now on we work only in the direct product $H \times G \in \cN_{c,2r}$ and identify $G$ and $H$ with their images under $\phi_G$ and $\phi_H$.

	Let $Q=\langle h_{i} g_{i}\, |\, 1\leq i\leq n\rangle$  and let $W=(v_{1} u_{1}, \ldots, v_{s} u_{s})$ be the sequence in full form 
	for the subgroup $Q$, 
	where $u_{i}\in G$ and $v_{i}\in H$.  Let  $0\leq r\leq s$ 
	be the greatest integer such that $v_{r}\neq 1$ (with $r=0$ if all $v_{i}$ are 1).
	Set $X=(u_{r+1},\ldots,u_{n})$ and $Y=(v_{1},\ldots,v_{r})$.
	In \cite[Thm.\ 4.1]{MacDonaldMNV15} it is shown that $X$ is the full-form sequence for the kernel of $\phi$ and $Y$ is the full-form sequence for the image.

	Now, to solve \ref{kernel}, it suffices to compute $W$ using \prettyref{thm:Compute_std_form} and return the corresponding $X$ as defined above. 
	For
	\ref{preimage}, apply \prettyref{thm:logspace_membership} to express $h$ as $h=v_{1}^{\beta_{1}}\cdots v_{r}^{\beta_{r}}$~-- then 
	return $g=u_{1}^{\beta_{1}}\cdots u_{r}^{\beta_{r}}$. 
\end{proof}

\subsection{Centralizers}
Before we focus on the conjugacy problem, we need one more preliminary result: the problem of computing centralizers.

\begin{theorem}[Centralizers]\label{thm:Centralizer}
	Let $c,r \in \N$ be fixed. The following is in \TC for unary inputs and in $\TC(\ExtGCD)$ for binary inputs:
	
	On input of some $G \in \Ncr$ given as \qmp and an element $g\in G$, compute
	a generating set $X$ for the centralizer of $g$ in $G$ (in case of binary inputs, the generating set will be given as set of words with binary exponents).
\end{theorem}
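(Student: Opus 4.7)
My plan is to reduce the centralizer computation to a kernel computation by inducting on the nilpotency class $c$, exploiting the fact that commutation with a central element defines a homomorphism. If $c = 1$, then $G$ is abelian and $C_G(g) = G$; its generators are read directly from the \qmp. Otherwise, let $Z = G_c$ denote the last nontrivial term of the lower central series of $G$. Because we chose the standard Mal'cev basis of $F_{c,r}$ to order generators by weight, $Z$ is the image in $G$ of the subgroup of $F_{c,r}$ generated by the weight-$c$ Mal'cev generators, say $a_{m'+1},\ldots,a_m$, and is central and abelian.

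First I would produce a \qmp of $G/Z \in \cN_{c-1,r}$: the standard Mal'cev basis of $F_{c-1,r}$ is $a_1,\ldots,a_{m'}$, and the defining normal subgroup of $G/Z$ inside $F_{c-1,r}$ is obtained from the full-form matrix of $G$ by truncating each row to its first $m'$ coordinates and re-running the matrix reduction of \prettyref{thm:Compute_std_form}. Applying the inductive hypothesis to $G/Z$ and the image $\bar g$ of $g$ yields generators $\bar h_1,\ldots,\bar h_k$ of $C_{G/Z}(\bar g)$. I would lift each $\bar h_i$ to an $h_i \in G$ by padding its Mal'cev coordinate vector with zeros in positions $m'+1,\ldots,m$, and set $H = \gen{h_1,\ldots,h_k,\,a_{m'+1},\ldots,a_m}$; by construction $H$ is exactly the preimage $\oneset{x \in G : [g,x]\in Z}$.

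The core observation is that $\phi: H \to Z$, $\phi(x) = [g,x]$, is a group homomorphism: the general commutator identity $[g,xy] = [g,y]\cdot [g,x]^y$ specialises, when $[g,x]\in Z$ is central, to $[g,xy] = [g,y][g,x] = [g,x][g,y]$, the last equality using that $Z$ is abelian. Since $C_G(g) = \ker(\phi)$, I would invoke \prettyref{thm:KernelAndPreimage} on $\phi$ with codomain $Z$ to obtain a generating set for $C_G(g)$ (given as words, resp.\ words with binary exponents).

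Because $c$ is fixed, the induction has constantly many levels, and each level performs $O(1)$ calls to the subroutines of \prettyref{thm:Compute_std_form}, \prettyref{thm:nilpotentExpWP}, and \prettyref{thm:KernelAndPreimage}; composing these constantly many \TC (resp.\ $\TC(\ExtGCD)$) circuits stays in the same class and preserves the unary or binary-exponent output representation. The main subtlety I anticipate is the bookkeeping needed to pass cleanly to $G/Z$: one must verify that the truncate-and-reduce step produces a valid \qmp of $G/Z$, which relies on the specific structure of the standard Mal'cev basis (the weight-$c$ Mal'cev generators are iterated commutators of length $c$, they vanish in $F_{c-1,r}$, and the remaining relations of $F_{c,r}$ restricted to the first $m'$ generators are precisely the relations of $F_{c-1,r}$).
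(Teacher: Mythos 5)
Your proof is correct and follows essentially the same route as the paper's: induction on the nilpotency class, passing to $G/\Gamma_c$, observing that $x\mapsto[g,x]$ is a homomorphism on the preimage $J$ of the centralizer of $\bar g$ (since its image lies in the central abelian subgroup $\Gamma_c$), and extracting $C_G(g)=\ker$ via \prettyref{thm:KernelAndPreimage}. The only difference is cosmetic: you spell out the truncate-and-reduce construction of the \qmp of $G/\Gamma_c$, where the paper simply discards the Mal'cev generators lying in $\Gamma_c$.
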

\begin{proof}
	Let $F_{c,r}=\Gamma_{0}\geq \Gamma_{1}\geq\cdots\geq\Gamma_{c+1}=1$ be the lower central 
	series of $F_{c,r}$. Clearly this central series projects onto a central series of $G$ and we simply write $\Gamma_{i}$ for its projection in $G$. Denote with $A=(a_1, \dots a_m)$ the standard Mal'cev basis of $F_{c,r}$, which is associated to the lower central series~-- in particular $a_1, \dots, a_r$ is a generating set for $F_{c,r}$.
	
	We proceed by induction on $c$. If $c=1$, then $F_{c,r}$ and $G$ are abelian and $C(g)=G$ so the output is $\oneset{a_1, \dots, a_r}$.  Assume 
	that the theorem holds for groups in $\cN_{c-1,r}$~-- 
	in particular, for $G/\Gamma_{c}$ (we obtain a \qmp of $G/\Gamma_{c}$ by simply forgetting about the Mal'cev generators in $\Gamma_c$). 
	A generating set $K=\{k_{1}\Gamma_{c},\ldots,k_{n}\Gamma_{c}\}$ for the centralizer of $g\Gamma_{c}$ in $G/\Gamma_{c}$ can be computed in \TC (resp.\ $\TC(\ExtGCD)$) by induction. 
	Let \[J=\gen{ k_{1},\ldots,k_{n}, a_{m'},\ldots,a_{m}},\] where $\oneset{a_{m'}, \dots, a_m} = A \cap \Gamma_c$.  
	Then $J$ is the preimage of 
	$\langle K\rangle$ under the homomorphism 
	$G\rightarrow G/\Gamma_{c}$.  
	Define $f:J\rightarrow G$ by 
	\[
	f(u) = [g,u].
	\]
	Since $u\in J$, $u$ commutes with $g$ modulo $\Gamma_{c}$; hence, $[g,u]\in \Gamma_{c}$ and so $\mathrm{Im}(f)\subseteq \Gamma_{c}$. Moreover, $f$ is a 
	homomorphism: we have
	\[
	f(g,u_{1}u_{2})=[g,u_{1}u_{2}]= 
	 [g,u_{2}][g,u_{1}][[g,u_{1}],u_{2}],
	\]  
	and $[g,u_{1}]\in \Gamma_{c}$; therefore, $[[g,u_{1}],u_{2}]\in \Gamma_{c+1}=1$, and 
	$[g,u_{1}]$ and $[g,u_{2}]$ commute, both being elements of the abelian group 
	$\Gamma_{c}$. 
	
	If $h$ commutes with $g$, then $h\Gamma_{c}\in \langle K\rangle$, \ie\ $h\in J$. Thus, the centralizer of $g$ is precisely the kernel of $f:J\rightarrow \Gamma_{c}$. A generating set can be computed in \TC (resp.\ $\TC(\ExtGCD)$) using 
	\prettyref{thm:KernelAndPreimage}.
\end{proof}

\subsection{The conjugacy problem}
Now, we can combine the previous theorems to solve the conjugacy problem in \TC following \cite[Thm.\ 4.6]{MacDonaldMNV15}.

\begin{theorem}[Conjugacy Problem]\label{thm:CP}
Let $c,r \in \N$ be fixed. The following is in \TC for unary inputs and in $\TC(\ExtGCD)$ for binary inputs:
On input of some $G \in \Ncr$ given as \qmp and elements $g,h\in G$, either 
\begin{itemize}
\item produce some $u\in G$ such that $g=h^u$, or 
\item determine that no such element $u$ exists.  
\end{itemize}
	 In case of unary inputs, $u$ will be returned as a word, for binary inputs, as a word with binary exponents.
\end{theorem}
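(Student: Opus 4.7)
The plan is to follow the standard inductive approach along the lower central series $F_{c,r}=\Gamma_0\geq\Gamma_1\geq\cdots\geq\Gamma_{c+1}=1$, which projects to a central series of $G$. Because $c$ is fixed, the recursion has only constantly many levels, so a \TC (resp.\ $\TC(\ExtGCD)$) bound at each level yields one overall.

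For the base case $c=1$ the group $G$ is abelian, so $g=h^{u}$ iff $g=h$ in $G$, and $u=1$ works; this test is handled by \prettyref{thm:nilpotentExpWP}. For the inductive step, first apply the induction hypothesis to $G/\Gamma_c$ (whose \qmp is obtained from that of $G$ by dropping the Mal'cev generators in $\Gamma_c$) and the images $g\Gamma_c,h\Gamma_c$. This either returns some $u_0\in G$ with $g\equiv h^{u_0}\pmod{\Gamma_c}$ or certifies that no conjugator exists in $G/\Gamma_c$, in which case no conjugator exists in $G$ either. In the former case set $h'=h^{u_0}$, so that $(h')^{-1}g\in\Gamma_c$.

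Any actual conjugator of $h$ to $g$ in $G$ has the form $u_0 v$ with $v\in G$ satisfying $v\Gamma_c$ in the centralizer of $h\Gamma_c$; compute a generating set for this centralizer in $G/\Gamma_c$ by \prettyref{thm:Centralizer} and lift it together with the Mal'cev generators contained in $\Gamma_c$ to a subgroup $J\leq G$, exactly as in the proof of \prettyref{thm:Centralizer}. Define $f:J\to\Gamma_c$ by $f(v)=[h',v]$. Since every $v\in J$ commutes with $h'$ modulo $\Gamma_c$ we have $[h',v]\in\Gamma_c$, and because $\Gamma_c$ is central of class~$1$ (so $[\Gamma_c,G]=1$ and $\Gamma_c$ is abelian), the identity
\[
[h',v_1 v_2]=[h',v_2][h',v_1][[h',v_1],v_2]=[h',v_1][h',v_2]
\]
shows that $f$ is a homomorphism, just as in the proof of \prettyref{thm:Centralizer}. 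The desired lift exists iff $(h')^{-1}g\in\mathrm{Im}(f)$, and in that case a preimage $v$ yields the conjugator $u=u_0 v$.

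Concretely, the circuit first computes $u_0$ recursively, then uses \prettyref{thm:Centralizer} to obtain generators of $J$, and finally applies \prettyref{thm:KernelAndPreimage}\,(\ref{preimage}) to $f:J\to\Gamma_c$ with target element $(h')^{-1}g$; the preimage step of \prettyref{thm:KernelAndPreimage} is stated so that it either returns $v$ or (by checking that the image of $f$ contains the target via \prettyref{thm:logspace_membership}) certifies non-existence. Each building block is in \TC in the unary case and in $\TC(\ExtGCD)$ in the binary case, and since the recursion has fixed depth $c$, composing these circuits stays within the same class. The output $u=u_0 v$ is produced as a word (resp.\ a word with binary exponents), as required. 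The main thing to check is that $f$ genuinely is a homomorphism and that the preimage/non-existence check in \prettyref{thm:KernelAndPreimage} is available; both were already set up in the previous theorems, so no new obstacle arises.
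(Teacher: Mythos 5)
Your proof follows essentially the same inductive strategy as the paper's: recurse on $G/\Gamma_c$, reduce conjugacy to membership of $(h')^{-1}g$ in the image of the homomorphism $v\mapsto[h',v]$ defined on the preimage $J$ of the relevant centralizer, and extract a conjugator via \prettyref{thm:Centralizer}, \prettyref{thm:logspace_membership} and \prettyref{thm:KernelAndPreimage}. One small correction: $J$ must be the preimage of the centralizer of $g\Gamma_c=h'\Gamma_c$ (not of $h\Gamma_c$, which is only a conjugate of it); your own subsequent claim that every $v\in J$ commutes with $h'$ modulo $\Gamma_c$ presupposes this, and with that fix your argument matches the paper's.
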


\begin{proof}
	 Again we proceed by induction on $c$.
	If $c=1$, then $G$ is abelian and $g$ is conjugate to $h$ if and only if 
	$g=h$. If so, we return $u=1$.
	
	Now let us assume $c>1$ and that the theorem holds for any nilpotent group of class 
	$c-1$~-- in particular, for $G/\Gamma_{c}$. We use the notation as in the proof of \prettyref{thm:Centralizer}. 
	
	The first step of the circuit is to check conjugacy of $g \Gamma_{c}$ and 
	$h \Gamma_{c}$ in $G/\Gamma_{c}$ which can be done in \TC by induction. 
	If these elements are not conjugate, then 
	$g$ and $h$ are not conjugate and the overall answer is `No'. Otherwise, we obtain some $v \Gamma_{c}\in G/\Gamma_{c}$
	such that $g \Gamma_{c} = h^v  \Gamma_{c}$. 
	
	Let $\phi:G\rightarrow G/\Gamma_{c}$ be the canonical homomorphism, 
	$J=\phi^{-1}(C(g\Gamma_{c}))$ (where $C(g\Gamma_{c})$ denotes the centralizer of $g\Gamma_{c}$), and define $f:J\rightarrow \Gamma_{c}$ by $f(x)=[g,x]$.  As in the proof of \prettyref{thm:Centralizer}, 
	the image of $f$ is indeed 
	in $\Gamma_{c}$ and $f$ is a homomorphism. We claim that 
	$g$ and $h$ are conjugate if and only if $g^{-1}h^{v}\in f(J)$.  Indeed, if there exists $w\in G$ such that $g=h^{vw}$, then 
	\[
	1 \cdot \Gamma_{c} = g^{-1} w^{-1} h^{v} w \cdot \Gamma_{c} = [g,w] \cdot \Gamma_{c},
	\]
	hence $w\in J$, so $w^{-1}\in J$ as well.  Then $g^{-1}h^{v}=[g,w^{-1}] \in f(J)$, as required.  The converse is immediate. 
	So it suffices to express, if possible, $g^{-1}h^{v}$ as $[g,w]$ with $w\in J$, in which case the conjugator is $u=vw^{-1}$.
	
	Now, the circuit computes a generating set $\{w_{1} \Gamma_{c}, \ldots, w_{n} \Gamma_{c}\}$ for 
	$C(g \Gamma_{c})$ using 
	\prettyref{thm:Centralizer}.  
	Then $J$ is generated by
	$\{w_{1},\ldots,w_{n},a_{m'}, \dots, a_m\}$, where again $\oneset{a_{m'}, \dots, a_m} = A \cap \Gamma_c$.
	After that, $\coords{g^{-1} h^v }$ is computed and
	\prettyref{thm:logspace_membership} used to determine whether $g^{-1}h^v\in f(J)$. If so, \prettyref{thm:KernelAndPreimage} is applied to find some $w\in G$ such that 
	$g^{-1} h^v=f(w)$. Finally, $u=vw^{-1}$ is returned in case all previous tests succeed. 
	Since we only concatenate a fixed constant number of \TC (resp.\ $\TC(\ExtGCD)$) computations, the whole computation is in \TC (resp.\ $\TC(\ExtGCD)$) again.
\end{proof}

\begin{remark}
We want to outline briefly how in the unary case the bounds of \cite[Thm.\ 4.6]{MacDonaldMNV15} can be used to directly solve the conjugacy problem of nilpotent groups in \TC. Since \cite[Thm.\ 4.6]{MacDonaldMNV15} is for a non-uniform setting, we fix a nilpotent group $G$ with generating set $A$. 
	Let $g,h$ be words over $A^{\pm1}$ as inputs for the conjugacy problem with of total length $n$.	
	By \cite[Thm.\ 4.6]{MacDonaldMNV15}, the length of conjugators is polynomial in $n$. By using binary exponents, the conjugators can be written with respect to a Mal'cev basis of $G$ using only $C\log n$ bits for some constant $C$ which only depends on $G$ (this is a well-known fact~-- see \eg  \cite[Thm.\ 2.3]{MacDonaldMNV15}). In particular, for all possible conjugators $u$ which have bit-length at most $C\log n$, it can be checked in parallel by a uniform family of \TC circuits whether $g= h^u$ in $G$ by using the circuits for the word problem \cite{Robinson93phd} (note that for this purpose each $u$ can be written down in unary since it is of length at most $n^C$).
\end{remark}

\section{Computing \qmps}
\label{sec:Uniform2}

The results in the previous sections always required that the group is given as a \qmp. However, we can use
\prettyref{thm:Compute_std_form} to transform an arbitrary presentation with at most $r$ generators of a group in \Ncr into a \qmp.

\begin{proposition}\label{prop:find_presentation_poly}
Let $c$ and $r$ be fixed integers. The following is in \TC: given an arbitrary finite presentation 
with generators $a_1, \dots, a_r$ of a group $G \in \Ncr$ (as a list of relators given as words over $\oneset{a_1, \dots, a_r}^{\pm1}$), 
compute a \qmp of $G$ (encoded in unary) and an explicit isomorphism.

Moreover, if the relators are given as words with binary exponents, then the binary encoded \qmp can be computed in $\TC(\ExtGCD)$.
\end{proposition}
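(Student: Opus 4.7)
The plan is to realize $G$ as $F_{c,r}/N$ where $N$ is the normal closure in $F_{c,r}$ of the input relators, compute a full-form generating sequence for $N$, and output this as the coordinate matrix of the desired \qmp. Since $G \in \Ncr$ is $r$-generated of nilpotency class at most $c$, the kernel of the surjection $F_r \twoheadrightarrow G$ contains $\gamma_{c+1}(F_r)$, so if $r_1, \dots, r_k$ are the input relators then $G \cong F_{c,r}/N$ with $N = \ll r_1, \dots, r_k \rr_{F_{c,r}}$. The explicit isomorphism simply sends each generator $a_i$ of the input presentation to the corresponding standard Mal'cev generator $a_i$ of $F_{c,r}$ (mod $N$); this data is trivial to emit.

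The first non-obvious step is to produce a \emph{subgroup} generating set for the normal closure $N$. I claim that
\[
S = \set{[r_i, a_{j_1}, \dots, a_{j_\ell}]}{0 \leq \ell \leq c-1,\; 1 \leq j_1, \dots, j_\ell \leq r,\; 1 \leq i \leq k}
\]
(with $\ell = 0$ meaning just $r_i$) generates $N$ as a subgroup of $F_{c,r}$. This is a standard fact in nilpotent groups: the identity $x^g = x[x,g]$ reduces conjugation to iterated commutation, and by multilinearity of commutators modulo higher weight together with $\gamma_{c+1}(F_{c,r}) = 1$, iterated commutators $[r_i, g_1, \dots, g_\ell]$ with arbitrary $g_j \in F_{c,r}$ can be expressed as products of elements of $S$ (by downward induction on $\ell$, starting with $\ell = c-1$ where weight considerations force exact multilinearity). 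For fixed $c$ and $r$, the set $S$ has size $O(k)$, and each of its elements is obtained from some $r_i$ by a constant number of concatenations and conjugations, hence is a word (resp.\ word with binary exponents) of length $O(\abs{r_i})$. Writing down $S$ is clearly in \TC.

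The rest is plug-and-play with the earlier theorems. Apply \prettyref{thm:nilpotentExpWP}, using the trivial \qmp of $F_{c,r}$ (empty relator matrix, since $F_{c,r}$ is torsion-free and the standard Mal'cev basis has $\cT = \emptyset$), to compute Mal'cev coordinates of each element of $S$ in parallel; this produces an integer coordinate matrix for the generating tuple of $N$, encoded in unary (resp.\ binary). Feed this matrix, together with the trivial \qmp of $F_{c,r}$, into \prettyref{thm:Compute_std_form} to obtain the unique full form of the matrix of coordinates of $N$. By \prettyref{lem:quotient_presentation}, appending this full form to the standard Mal'cev presentation of $F_{c,r}$ yields a consistent nilpotent presentation of $G$, i.e.\ a \qmp. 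Since a constant number of \TC (resp.\ $\TC(\ExtGCD)$) subroutines are composed, the whole computation is in \TC (resp.\ $\TC(\ExtGCD)$).

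The main obstacle is purely the generation claim for $S$; once that is in place, everything reduces to invocations of \prettyref{thm:nilpotentExpWP} and \prettyref{thm:Compute_std_form}. In particular, the reason it suffices to commute only with the \emph{original} generators $a_1, \dots, a_r$ (and not all Mal'cev generators $a_1, \dots, a_m$) is that every additional Mal'cev generator is itself an iterated commutator in $a_1, \dots, a_r$; expanding such a commutator via multilinearity mod $\gamma_{c+1} = 1$ shows that commuting $r_i$ with it produces nothing new beyond products of elements already in $S$.
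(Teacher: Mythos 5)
Your proposal is correct and takes essentially the same route as the paper: write $G=F_{c,r}/N$ with $N$ the normal closure of the input relators, generate $N$ as a subgroup by iterated commutators of the relators with the generators (of linear total length for fixed $c,r$), run the matrix reduction of \prettyref{thm:Compute_std_form} to get the full form, and invoke \prettyref{lem:quotient_presentation}. The only (harmless) difference is that the paper takes the commutator arguments from $A\cup A^{-1}$ with depth up to $c$, which makes normality of the generated subgroup immediate, whereas your positive-generator set with depth $\le c-1$ requires the ``multilinearity modulo higher weight'' induction you correctly sketch.
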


\begin{proof}
	Let $A=\oneset{a_1, \dots, a_r}$ and let $R$ be the set of relators, \ie $G$ is presented as $G=\genr{A}{R}$. Let $F = F_{c,r} = \gen{a_1, \dots, a_r}$ be the free nilpotent group of class $c$ on generators $A$. Let $B= \oneset{b_1, \dots, b_m}$ be the standard Mal'cev basis of $F$ such that $b_i = a_i$ for $i = 1, \dots, r$ and let $S$ denote the set of relations such that $\genr{B}{S}$ is a consistent nilpotent presentation for $F$.

	Consider the natural surjection $\phi:F\rightarrow G$ 
	and let $N=\ker(\phi)$, which is the normal closure of $R$ in $F$.  
	Denoting $R=\{r_1,\ldots, r_k\}$, $N$ is generated  by iterated commutators $[\ldots[[r_i,x_1],x_2],\ldots,x_j]$, where $i=1,\ldots,k$, $j\le c$, and $x_1,\ldots, x_j\in A\cup A^{-1}$.
	The total length of these generators is linear since $c$ and $r$ are constant. Using \prettyref{thm:Compute_std_form} in the group $F$, we can produce the full-form sequence $T$ for $N$ in \TC (resp.\ in $\TC(\ExtGCD)$ for binary inputs).
	
	Now $G\simeq \langle B\mid S\cup T\rangle$ and by \prettyref{lem:quotient_presentation} this is a (consistent) \qmp.
\end{proof}
\begin{remark}
Because of \prettyref{prop:find_presentation_poly}, in all theorems above where the input is a \qmp, we can also take an arbitrary $r$-generated presentation of a group in $\Ncr$ as input. However, be aware that for the word problem (\prettyref{thm:nilpotentExpWP} and \prettyref{cor:nilpotentExpWP}) the complexity changes from \TC to $\TC(\ExtGCD)$ in the binary case.
\end{remark}

\section{Power problem and conjugacy in wreath products of nilpotent groups}\label{sec:pp}

In \cite{MiasnikovVW17}, the conjugacy problem in iterated wreath products of abelian is shown to be in \TC (for a definition of iterated wreath products we refer to \cite{MiasnikovVW17}). The crucial step there is the transfer result that the conjugacy problem in a wreath product $A \wr B$ is \TC-Turing-reducible to the conjugacy problems of $A$ and $B$ and the so-called power problem of $B$. 

	The \emph{power problem} of $G$ is defined as follows: on input  of $g,h \in G$ (as words over the generators) decide whether $h$ is a power of $g$ that is whether there is some $k\in \Z$ such that $g^k= h$ in $G$. In the ``yes'' case compute this $k$ in binary representation. If $g$ has finite order in $G$, the computed $k$ has to be the smallest non-negative such $k$. 

By \cite{MiasnikovVW17}, also the power problem of $A \wr B$ is \TC-Turing-reducible to the power problems of $A$ and $B$ given that torsion elements of $B$ have uniformly bounded order. The latter condition is also preserved by wreath products. 
Thus, in the light of \cite{MiasnikovVW17}, it remains to show that the power problem of nilpotent groups is in \TC and that the order of torsion elements is uniformly bounded, in order to establish the following theorem (note that \cite{MiasnikovVW17} is only for fixed groups; therefore, we formulate also the following results in a non-uniform setting):

\begin{theorem}
	Let $A$ and $B$ be finitely generated nilpotent groups and let $d \geq 1$, then the conjugacy problem of the $d$-fold iterated wreath products $A \wr^d B$ as well as $A \;{^d\wr}\; B$ is in \TC.
\end{theorem}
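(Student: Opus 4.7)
As the text preceding the statement explains, via the transfer results of \cite{MiasnikovVW17} the theorem reduces to establishing two facts about every finitely generated nilpotent group $G$: (1) the power problem of $G$ is in \TC, and (2) the orders of torsion elements of $G$ are uniformly bounded. Conjugacy of the nilpotent base groups themselves is in \TC by \prettyref{thm:CP}, so only (1) and (2) are new. Fact (2) is classical: in any finitely generated nilpotent group the torsion elements form a finite normal subgroup $T(G)$, so every torsion element has order at most $|T(G)|$, a constant depending only on $G$; boundedness of torsion orders is preserved under wreath products (cf.\ \cite{MiasnikovVW17}), so (2) carries over to all iterated wreath products in the statement.

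For (1), I would proceed by induction on the nilpotency class $c$ of $G$, using a central series $G=G_1\ge\cdots\ge G_c\ge G_{c+1}=1$ (noting that $G_c$ is central and hence abelian). In the abelian base case let $g,h\in G$ have Mal'cev coordinates $(\gamma_i)$ and $(\eta_i)$. If some $i\notin\cT$ satisfies $\gamma_i\ne 0$, then any candidate exponent must equal $k=\eta_i/\gamma_i$, obtainable by \TC integer division (\prettyref{thm:divisionTC}) and verified via the binary-exponent word problem (\prettyref{thm:nilpotentExpWP}). Otherwise $g$ has bounded order by (2) and all candidates can be tried in parallel.

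For the inductive step, pass to $\bar G=G/G_c$, which is nilpotent of class $c-1$, and recursively compute $k_0$ with $\bar h=\bar g^{k_0}$ in $\bar G$. If $\bar g$ has infinite order in $\bar G$, then $k_0$ is the unique candidate and is verified directly by \prettyref{thm:nilpotentExpWP}. If $\bar g$ has finite order $n$ (bounded by (2)), every solution in $G$ has the form $k=k_0+nm$; setting $z:=g^n\in G_c$ and $y:=g^{-k_0}h\in G_c$, the task reduces to the abelian power problem $z^m=y$ in $G_c$, i.e.\ to the base case. The final exponent $k=k_0+nm$ is assembled in binary by \TC iterated addition and multiplication (\prettyref{thm:additionTC}), and reduced modulo the (bounded) order of $g$ when $g$ is torsion in order to return the smallest non-negative representative.

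The main obstacle I anticipate is the orchestration of the recursion rather than any individual step: intermediate exponents can grow exponentially at each level and must be manipulated in binary throughout; the branching ``infinite versus finite order in $\bar G$'' should be handled by running both branches in parallel and keeping whichever certificate validates through the word problem; and the enumeration of the residue $k_0\in\{0,\ldots,n-1\}$ in the finite-order case has to be laid out uniformly over the (constantly many) candidates. Since $c$ is a constant the recursion has constant depth and each level invokes only \TC-computable primitives, so the full circuit family stays in \TC, which together with (2) closes the argument via \cite{MiasnikovVW17}.
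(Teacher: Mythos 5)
Your plan is correct and reaches the theorem the same way the paper does at the top level (reduce via the transfer results of \cite{MiasnikovVW17} to the power problem and to uniformly bounded torsion in nilpotent groups), but both supporting lemmas are proved by genuinely different arguments. For the torsion bound you invoke the classical fact that the torsion elements of a finitely generated nilpotent group form a finite subgroup $T(G)$; the paper instead gives a short self-contained induction along a Mal'cev basis ($h^{kM}=1$ where $k=\mathrm{ord}(a_1)$ and $M$ bounds torsion in $\gen{a_2,\dots,a_m}$). Both are fine. The more interesting divergence is in the power problem: you induct on the nilpotency class via the central series, peeling off the central term $G_c$, so that the solution set coming from the quotient $\bar G=G/G_c$ is a single progression $k_0+n\Z$ that you substitute directly into an abelian power problem $z^m=y$ in $G_c$. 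The paper instead inducts along the Mal'cev basis, one cyclic quotient at a time, and to make that work it must strengthen the induction hypothesis to ``power problem restricted to a fixed arithmetic progression $\alpha+\beta\Z$,'' threading the congruence constraints through all $m$ levels. Your decomposition buys you freedom from that bookkeeping, at the price of needing slightly more than the power problem as literally specified from the recursive call: you need the full solution set, i.e.\ a particular $k_0$ \emph{and} the period $n=\mathrm{ord}(\bar g)$. That is harmless here, since the uniform torsion bound lets you compute $\mathrm{ord}(\bar g)$ (or certify infinite order) with a constant number of word-problem queries, but you should state it explicitly as part of what the recursion returns. Two smaller remarks: your base-case rule ``$k=\eta_i/\gamma_i$ for $i\notin\cT$ with $\gamma_i\neq 0$'' is valid because in a class-one group (and in $G_c$) the Mal'cev relations $a_i^{e_i}=\cdots$ have trivial right-hand sides, so no carries pollute the non-torsion coordinates -- this would fail for a general nilpotent presentation of an abelian group, so it is worth saying why it holds here; and the exponents you fear may ``grow exponentially'' are in fact polynomially bounded in the input length by Lemma~\ref{lem:malcev}, as the paper notes, so binary arithmetic is a convenience rather than a necessity.
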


\begin{proof}
The following two lemmas together with a repeated application of Theorem 3, Lemma 5, and Theorem 5 of \cite{MiasnikovVW17}.
\end{proof}

\begin{lemma}
	Every finitely generated nilpotent group has a uniform bound on the order of torsion elements.
\end{lemma}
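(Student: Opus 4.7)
The plan is to reduce the lemma to the classical fact that the torsion subgroup $T(G)$ of a finitely generated nilpotent group $G$ is finite; this is a standard result, see e.g.\ \cite{Hal69,Kargapolov-Merzlyakov}. Granting this, every torsion element has order dividing $|T(G)|$, so $|T(G)|$ itself provides the required uniform bound.

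For a self-contained argument, I would proceed by induction on the nilpotency class $c$. The base case $c=1$ is the structure theorem for finitely generated abelian groups. For the inductive step, one first verifies that in a nilpotent group the torsion elements form a subgroup, which is a standard induction along the lower central series: if $g,h$ are torsion, then modulo $G_c$ the product $gh$ is torsion by induction applied to $G/G_c$, and the obstruction lies in the central abelian subgroup $G_c$, where it can be controlled by a further induction. Once $T(G)$ is known to be a subgroup, it is finitely generated (as a subgroup of the polycyclic group $G$) and periodic by definition, so it only remains to show that every finitely generated periodic nilpotent group $H$ is finite. This again follows by induction on class: $H/H_c$ is finitely generated periodic nilpotent of smaller class, hence finite by induction, while $H_c\leq Z(H)$ is abelian (because $[H_c,H_c]\leq[H_c,H]=1$), finitely generated, and periodic, hence finite by the structure theorem; combining yields $|H|=|H/H_c|\cdot|H_c|<\infty$.

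The only genuinely group-theoretic step is the closure of torsion under multiplication in the nilpotent setting, and this is the main obstacle to a fully elementary presentation; every other step is routine bookkeeping along the lower central series.
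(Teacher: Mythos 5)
Your argument is correct, but it takes a genuinely different route from the paper. You reduce the lemma to the classical fact that the torsion subgroup $T(G)$ of a finitely generated nilpotent group is finite, whence $|T(G)|$ bounds all orders; the paper instead inducts directly along the Mal'cev basis $(a_1,\dots,a_m)$: if $a_1$ has infinite order modulo $\langle a_2,\dots,a_m\rangle$ then every torsion element already lies in $\langle a_2,\dots,a_m\rangle$ (since $\mathrm{Coord}_1(h^l)=l\cdot\mathrm{Coord}_1(h)$), and if $a_1$ has order $k$ modulo that subgroup then $h^k\in\langle a_2,\dots,a_m\rangle$ for every $h$, so the inductive bound $M$ for $\langle a_2,\dots,a_m\rangle$ yields the bound $kM$. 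The paper's argument is shorter, needs neither the (nontrivial) fact that torsion elements form a subgroup nor finiteness of $T(G)$, fits the Mal'cev-coordinate machinery already set up, and produces an explicit bound, essentially $\prod_{i\in\mathcal{T}}e_i$. Your approach buys a reduction to a textbook theorem and, via $|T(G)|$, in principle a sharper bound (every order divides $|T(G)|$ rather than merely being at most $kM$). One caution on your self-contained version: the step ``the obstruction lies in the central abelian subgroup $G_c$, where it can be controlled by a further induction'' is the genuinely delicate point --- one must argue that $(gh)^N\in G_c$ is itself torsion, e.g.\ via the identity $[g^a,h]\equiv[g,h]^a$ modulo higher terms of the lower central series, or by showing $\langle g,h\rangle$ is finite because its abelianization is. You flag this yourself, and the citation route makes the proof complete regardless, but as written that sentence is a sketch rather than a proof.
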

\begin{proof}
	We proceed by induction along a Mal'cev basis $(a_1, \dots, a_m)$ of $G$. If $a_1$ has infinite order, we are done by induction. Otherwise, let $k$ be the order of $a_1$ and $M$ be such that $g^M=1$ for all torsion elements $g \in \gen{a_2, \dots, a_m}$. Consider a torsion element $h \in \gen{a_1, \dots, a_m}$. Then $h^k \in \gen{a_2, \dots, a_m}$. Thus, $h^{kM} = 1$. Therefore, $kM$ is an upper bound on the order of torsion elements in $G$.
\end{proof}

\begin{lemma}
	For every finitely generated nilpotent group $G$, the power problem of $G$ is in uniform $\TC$. 
\end{lemma}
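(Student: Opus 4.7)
The plan is to distinguish two cases depending on whether $g$ has finite or infinite order in $G$. Let $N$ denote the uniform bound on the order of torsion elements of $G$ supplied by the preceding lemma. The circuit first computes $g^N$ via the word problem (\prettyref{thm:nilpotentExpWP}) and tests whether $g^N = 1$; this selects the branch.

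If $g^N = 1$, then $g$ is torsion and the order of $g$ divides the constant $N$. The circuit then computes $g^k$ and compares it with $h$ in parallel for each $k \in \{0,1,\dots,N-1\}$, and returns the smallest valid $k$ or ``no solution''. Only constantly many candidates are tested and each test is a single word-problem instance, so this branch lies in \TC.

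The interesting case is when $g$ has infinite order, so $\gen{g} \cong \Z$ and the exponent $k$ with $g^k = h$ is, if it exists, unique. Here I invoke \prettyref{thm:logspace_membership} on the subgroup generator list $\{g\}$ with query element $h$. The main obstacle is that the default Mal'cev-coordinate output of that theorem is expressed in the full-form basis of $\gen{g}$, which in general consists of $s \ge 1$ elements and need not be $g$ itself, so reading $k$ directly from those coordinates would require additional linear algebra inside the abelian group $\gen{g}$. Since the inputs are given in unary, however, \prettyref{thm:logspace_membership} also supplies an alternative output form: a word $h = g^{\epsilon_1}\cdots g^{\epsilon_t}$ with each $\epsilon_j \in \{-1,+1\}$ and $t$ polynomially bounded in the input length. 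Because our subgroup has only one generator, every index in this output word is $1$, so in $G$ the word collapses to $g^{\sum_j \epsilon_j}$; the unique exponent is therefore $k = \sum_{j=1}^{t} \epsilon_j$, computable in binary by iterated addition (\prettyref{thm:additionTC}). Polynomiality of $t$ keeps $|k|$ within $O(\log n)$ bits.

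The entire procedure is a constant-depth composition of \TC subroutines and therefore lies in \TC.
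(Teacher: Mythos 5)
Your proof is correct in substance, but it takes a genuinely different route from the paper's. The paper argues by induction along a Mal'cev basis $(a_1,\dots,a_m)$, proving the stronger statement that the power problem restricted to any fixed arithmetic progression $\alpha+\beta\Z$ is in \TC: it reads off the exponents of $g$ and $h$ in the cyclic quotient $G/\oneset{a_2=\cdots=a_m=1}$, solves the resulting one-variable division or congruence, and recurses into $\gen{a_2,\dots,a_m}$ with an updated progression; this uses nothing beyond the word problem and integer division. You instead split on whether $g$ is torsion, dispose of the torsion case by brute force over the constantly many candidate exponents, and in the infinite-order case delegate everything to the subgroup membership circuit of \prettyref{thm:logspace_membership}, extracting $k$ as $\sum_j\epsilon_j$ from the alternative output form; since $\gen{g}\cong\Z$ the exponent is unique, so this is sound. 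Your route is shorter and avoids the arithmetic-progression bookkeeping, at the price of invoking the much heavier matrix-reduction machinery (and hence the unary extended gcd) where the paper's argument is elementary; you are also right that the full-form sequence for $\gen{g}$ may have $s>1$ (for example $g=a_1a_2$ in $\Z/4\times\Z$ yields the full sequence $(g,g^4)$), so the detour through the alternative output form is genuinely needed. One small repair: the preceding lemma as stated only guarantees a uniform \emph{bound} $N$ on the orders of torsion elements, so the test ``$g^N=1$'' could give a false negative for a torsion element whose order does not divide $N$; test $g^{N!}=1$ (or $g^{\mathrm{lcm}(1,\dots,N)}=1$) instead, or observe that the constant produced in that lemma's proof is in fact an exponent annihilating all torsion elements. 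With that adjustment the argument goes through.
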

\begin{proof}
	We show a slightly more general statement by induction along a Mal'cev basis $(a_1, \dots, a_m)$ of $G$:  for every fixed arithmetic progression $\alpha + \beta \Z$, the power problem restricted to $\alpha + \beta \Z$ is in \TC, \ie given $g,h\in G$ it can be decided in \TC whether there is some $n\in \alpha + \beta \Z$ with $g^n=h$ in $G$ and, if so, that $n$ can be computed in \TC.

	We consider the input words $g$ and $h$ in the quotient $G/\oneset{a_2=\cdots=a_m=1}$. Let $g=a_1^k$ and $h=a_1^\ell$ in this quotient. If $k=\ell = 0$, it remains to solve the power problem in the subgroup $ \gen{a_2, \dots, a_m}$, which can be done by induction. Next, we distinguish the two cases that $a_1$ has infinite order and that it has finite order (in $G/\oneset{a_2=\cdots=a_m=1}$).
	
	In the case of infinite order, the only possible value for $n$ can be computed as $\ell / k$ (in \TC by \prettyref{thm:divisionTC}). If this is not an integer or not contained in the arithmetic progression (\ie\ $\ell / k \not\equiv \alpha \mod \beta$), then $h$ is not a power of $g$. Otherwise, one simply checks whether $g^{\ell / k}= h$ in $G$ (\ie\ solving the word problem). As $\ell$ is bounded by the input length by \prettyref{lem:malcev}, this can be done in \TC by \prettyref{thm:nilpotentExpWP}.
	
	In the case of finite order, let $d$ denote the order of $a_1$. It can be checked for all $0 \leq i < d$ in parallel whether $ki = \ell \mod d$. In case that there is such an $i$, the answer to the power problem is the same as the answer to the power problem in the subgroup $\gen{a_2, \dots, a_m}$ restricted to the arithmetic progression $ i + d\Z \cap \alpha +\beta\Z$ (the intersection can be hard-wired since there are only finitely many possibilities for a fixed group since the modulo is bounded by the least common multiple of the orders of finite order elements of the Mal'cev basis)~-- if there is no such $i$, the answer is ``no''.
\end{proof}

\section{Conclusion and Open Problem}

We have seen that most problems which in \cite{MacDonaldMNV15} were shown to be in \L indeed are in \TC even in the uniform setting where the number of generators and nilpotency class is fixed. Moreover, their binary versions are in $\TC(\ExtGCD)$ meaning that nilpotent groups are no more complicated than abelian groups in many algorithmic aspects. This contrasts with the slightly larger class of polycyclic groups: while the word problem is still in \TC \cite{Robinson93phd,KoenigL15}, the conjugacy problem is not even known to be in \NP. We conclude with some possible generalizations of our results:

\begin{question}
	Does a uniform version of \prettyref{thm:nilpotentExpWP} hold (\ie is the uniform word problem still in \TC) for fixed nilpotency class but an arbitrary number of generators? 
	
	What happens to the complexity if also the nilpotency class is part of the input? Note that in that case it is even not clear whether the word problem is still in polynomial time.
\end{question}

\begin{question}
	Is there a way to solve the conjugacy problem for nilpotent groups with binary exponents in \TC? Notice that we needed to compute greatest common divisors for solving the subgroup membership problem. However, there might be a way of solving the conjugacy problem using another method.
\end{question}

\begin{question}
What is the complexity of the uniform conjugacy problem when the nilpotency class is not fixed?
\end{question}

On the way for proving that the subgroup membership problem of nilpotent groups is in \TC, we established that the extended gcd problem with unary inputs and outputs is in \TC. However, the computed solution is not as small as the one computed by the \L algorithm from \cite{MH94}:

\begin{question}
	Is the following problem in \TC: given unary encoded numbers $a_1, \dots, a_n \in \Z$, compute $x_1 , \dots, x_n \in \Z$ with $\abs{x_i} \leq \frac{1}{2}\max\oneset{\abs{a_1}, \dots, \abs{a_n}}$ such that $x_{1} a_{1} + \dots + x_{n} a_{n} =  \gcd(a_{1},\ldots,a_{n})$?
\end{question}


\end{document}